\newtheorem{theorem}{Theorem}[section]
\newtheorem{lemma}{Lemma}[section]
\newtheorem{corollary}{Corollary}[section]
\newtheorem{remark}{Remark}[section]
\newcommand{\e}[1]{\begin{equation}#1\end{equation}}
\newcommand{\dx}{\partial_x}%{\frac{d}{dx}}
\newcommand{\dy}{\partial_y}%{\frac{d}{dy}}
\newcommand{\lu}[2]{\lambda_{#1,#2}}%\underscore
\newcommand{\luoz}{\lambda_{1,0}}%one zero\newcommand{\luzo}{\lambda_{0,1}}%zero one
\newcommand{\luzo}{\lambda_{0,1}}%one zero
\newcommand{\luij}{\lambda_{i,j}}
\newcommand{\dux}{\partial_x}
\newcommand{\duy}{\partial_y}
\newcommand{\eye}{{\rm{}i}}
\newcommand{\bfn}{\mathbf{n}}
\newcommand{\jmp}[1]{[\![#1]\!]}  
\newcommand{\mv}[1]{\{\!\!\{#1\}\!\!\}}           
\newcommand{\avg}[1]{\{\!\!\{#1\}\!\!\}}           
\newcommand{\cEI}{{\cal E}_I}
\newcommand{\cED}{{\cal E}_D}
\newcommand{\cER}{{\cal E}_R}
\newcommand{\cE}{{\cal E}}
\newcommand{\GPWK}{GPW_\kappa^{p,q}(K)}
\newcommand{\GPWKhat}{GPW_\kappa^{p,q}\left(\hat K\right)}
\title{Numerical simulation of wave propagation in inhomogeneous media using Generalized Plane Waves}
\author{Lise-Marie Imbert-G\'erard\thanks{Courant Institute of Mathematical Sciences,
New York University,
251 Mercer Street,
New York, N.Y. 10012-1185, USA. (e-mail: imbertgerard@cims.nyu.edu)}, Peter Monk\thanks{Department of Mathematical Sciences, University of Delaware, Newark DE 19716, USA  (e-mail: monk@udel.edu).}
}
\begin{document}
\maketitle
\tableofcontents

\section*{Abstract}
The Trefftz Discontinuous Galerkin (TDG) method is a technique for approximating the Helmholtz equation (or other linear wave equations) using
piecewise defined local solutions of the equation to approximate the global solution.  When coefficients in the equation (for example, the refractive index) are piecewise constant it is common to use plane waves on each element.  However when the coefficients are smooth functions of position, plane waves are no longer directly applicable.  In this paper we show how Generalized Plane Waves (GPWs) can be used in a modified TDG scheme to approximate the solution for piecewise smooth coefficients.  GPWs are approximate solutions to the equation that reduce to plane waves when the medium through which the wave propagates is constant.  We shall show how to modify the TDG sesquilinear form to allow us to prove 
convergence of the GPW based version.  The new scheme retains the high order convergence of the original TDG scheme (when the solution
is smooth) and also retains the same number of degrees of freedom per element (corresponding to the directions of the GPWs).  Unfortunately it looses the advantage that only skeleton integrals need to be performed.  Besides proving convergence, we provide numerical examples to test our theory.

\section{Introduction}
The Trefftz Discontinuous Galerkin (TDG) method proposed in \cite{git09} is a mesh based method for approximating solutions of the Helmholtz
equation.  This method generalizes the Ultra Weak Variational Formulation (UWVF)~of the same problem~\cite{cessenat_phd,despres} by allowing different weighting
strategies on penalty terms in the TDG method.  Error analysis \cite{buf07,git09,HMP11,hmp13,hmp15}  and computational experience \cite{hut03} 
show that the method can be an efficient way of approximating solutions of the Helmholtz equation.  It has also become clear that the method
works best in an $hp$-mode (see  \cite{hmp15}) where large elements are used away from boundaries in the computational domain, together with larger numbers of plane waves.

However because of the use of simple Trefftz functions (usually plane waves element by element), it has to be assumed that the coefficients in the governing partial differential
equation are piecewise constant.  Of course smoothly varying coefficient functions could be first approximated by a piecewise constant function and then the resulting perturbed Helmholtz equation could be solved by TDG or UWVF.  But this would require small elements and hence defeat some of the potential advantages of using large elements in a Trefftz based scheme.

To circumvent the difficulty with  smoothly varying coefficients, we propose to use approximate solutions of the underlying partial differential equation constructed
element by element where the coefficients are variable.  In this work we use the Generalized Plane Waves (GPW) of \cite{IGD2013,IG2015} as a basis for the TDG type scheme.   

Note that smoothly varying coefficients arise in the simulation of electromagnetic wave propagation in tokamaks where the permittivity is spatially variable and may even become negative.  Indeed the original design of GPWs in \cite{IGD2013,IG2015} was motivated precisely by this application.

To describe the setting for applying GPWs in more detail
let us consider the following model problem from \cite{hmp13}.  Given a bounded Lipschitz polyhedron $\Omega$ that is star shaped 
with respect to the origin and a larger Lipschitz polyhedron $\Omega_R$ containing $\overline{\Omega_D}$, we define the computational domain to be the annulus 
$\Omega=\Omega_R\setminus\overline{\Omega_D}$.  The two boundaries of $\Omega$ are $\Gamma_D=\partial\Omega_D$ and $\Gamma_R=\partial\Omega_R$ and we use a normal $\mathbf{n}$ that is outwards from $\Omega$. Because we shall use some regularity results from
\cite{hmp13} we need to assume that $\Omega_R$ is star-like with respect to a ball of radius $\gamma_Rd_\Omega$ centered at the origin where
$\gamma_R>0$ and $d_\Omega$ is the diameter of $\Omega$.

Suppose we are give a wave number $\kappa>0$.  In addition given a strictly positive, piecewise smooth and bounded real function $\epsilon\in L^{\infty}(\Omega)$  and another function $g_R\in L^2(\Gamma_R)$, we want to approximate
the solution $u$ of
\begin{eqnarray}
\Delta u+\kappa^2  \epsilon u&=&0\mbox{ in }\Omega,\label{helm}\\
u&=&0\mbox{ on }\Gamma_D,\label{dirich}\\
{\partial_n u}+\eye\kappa u&=&\eye\kappa g\mbox{ on }\Gamma_R.\label{imp}
\end{eqnarray}
As pointed out in \cite{hmp13} this is a model problem for scattering (for example of an $s$-polarized electromagnetic wave from a perfect conductor embedded in a dielectric  in two dimensions).  The impedance boundary condition is then a simple radiation boundary condition.

As we shall detail shortly, if we assume that the function $\epsilon$ is piecewise analytic on each element, we can approximate it by a power series.  With this in hand we shall give details of a recursive algorithm for generating the coefficients of basis functions on each element that satisfy (\ref{helm}) to high accuracy.  These are constructed so that if the coefficient $\epsilon$ is constant on an element, the resulting basis function is just a plane wave.  Using these generalized plane waves we can prove convergence of a modified TDG scheme.  The resulting discrete problem obtains high order convergence
for smooth solutions as is the case for the standard TDG or UWVF.  We only consider $h$-convergence in this paper.

As we shall see, the main disadvantage of the use of GPWs in the TDG method is the need to integrate over elements in the grid.  We would prefer
to use them in a generalized UWVF avoiding this integration.  However, although numerical experiments are encouraging \cite{LM_thesis} we do not have a theoretical justification of this approach.  

The paper proceeds as follows.  In the next section we briefly outline our modification of the basic TDG method.  Then in Section~\ref{GPW} we show how to construct GPWs and then obtain two new error estimates  for these functions that will underlie our error analysis.  We also show that piecewise linear functions can be approximated element by element using the GPW functions.  In Section~\ref{EE} we derive
error estimates for the new TDG scheme with GPW basis functions.  Finally in Section~\ref{NT} we give some basic numerical tests of the new algorithm.

\section{The Plane Wave Discontinuous Galerkin Method}
Even with a variable coefficient $\epsilon$, the choice of domain $\Omega$ and the conditions on $\epsilon$ guarantee that if $g_R\in H^s(\Gamma_R)$ for some sufficiently small $s>0$ (depending on the interior angles of $\Gamma_R$) then $u\in H^{3/2+s}(\Omega)$~\cite[Theorem 2.3]{hmp13}. This regularity will allow us to 
develop consistent fluxes for $u$ and it's derivatives.  Unfortunately, because $\epsilon$ is variable, the dependence on $\kappa$ of the continuity
constants for estimates in this paper is not easy to track.  Therefore we note now that constants in the analysis will depend in an unspecified way on $\kappa$.

As is usual for DG schemes, we start with a mesh and continue to define the method using definitions from \cite{hmp13}. Suppose we cover $\Omega$ by a finite element mesh ${\cal T}_h$ of regular triangular elements $K$ of maximum diameter $h$ (in fact more general domains can easily be allowed). The diameter of an element $K$ is denoted $h_K$. In addition following \cite{hmp13} we assume
\begin{enumerate}
\item {\em Local quasi-uniformity}: there exists a constant $\tau\geq 1$ independent of $h$ such that
\[
\tau^{-1}\leq \frac{h_{K_1}}{h_{K_2}}\leq \tau\mbox{ for all triangles }K_1,K_2\mbox{ meeting at any edge}.
\]
\item {\em Quasi-uniformity close to $\Gamma_R$}: For all triangles $K$ touching $\Gamma_R$ there is a constant $\tau_R$ independent of $h$ such that
\[
\frac{h}{h_K}\leq \tau_R.
\]
\end{enumerate}
Let $\bfn_K$ denote the unit outward normal to element $K$.
Let 
 $K$ and $K'$ denote two elements in ${\cal T}_h$ meeting at  an  edge $e$ then, on $e$, we make the standard definitions of the average value and jump of functions across $f$:
\begin{eqnarray*}
\mv{u}=\frac{u|_{K}+u|_{K'}}{2},&\quad&\mv{\mathbf{\sigma}}=\frac{\mathbf{\sigma}|_K+\mathbf{\sigma}|_{K'}}{2},\\
\jmp{u}=u|_K\bfn_K+u|_{K'}\bfn_{K'},&\quad&
\jmp{\mathbf{\sigma}}=\mathbf{\sigma}|_{K}\cdot\bfn_K+\mathbf{\sigma}|_{K'}\cdot\bfn_{K'}.
\end{eqnarray*}
We denote by $\cE_h$ the set of all edges in the mesh.  Then let
\begin{itemize}
\item $\cEI$ denote the set of all edges in the mesh interior to $\Omega$,
\item $\cED$ is the set of all boundary edges on $\Gamma_D$,
\item  $\cER$ is the set of all boundary edges on $\Gamma_R$.
\end{itemize}
We also need three positive penalty parameters that are functions of position on 
the skeleton of the mesh: $\alpha$, $\beta$ and $\delta$.  At this point these are simply assumed to be positive functions of position on $\cE$ and will be given in more detail shortly.
Using the above defined jumps and average values, we are lead to consider the following standard sesquilinear form for TDG \cite{hmp13,MelenkEsterhazy12,kapita14}:
\begin{eqnarray}
A_h(u,v)&=& \int_{\Omega}\left(\nabla_hu\cdot\nabla_h\overline{v}
-\kappa^2\epsilon u\,\overline{v}\right)\,dA-\int_{\cEI}\left(\avg{\nabla_h u}\cdot\jmp{\overline{v}}
+\jmp{ u}\cdot\avg{\nabla_h\overline{v}}\right)\,ds\nonumber\\&&
-\frac{1}{i\kappa}\int_{\cEI}\beta\jmp{\nabla_h u}\jmp{\nabla_h\overline{v}}\,ds
+{i\kappa}\int_{\cEI}\alpha\jmp{ u}\cdot\jmp{\overline{v}}\,ds-\int_{\cER}\delta u
{\partial_n \overline{v}}\,ds\nonumber\\&&
-\int_{\cER}\delta{\partial_n u}\overline{v}\,ds
-\frac{1}{i\kappa}\int_{\cER}\delta{\partial_n u}{\partial_n \overline{v}}\,ds+i\kappa\int_{\cED}\alpha u\overline{v}\,ds\nonumber\\&&
+i\kappa\int_{\cER}(1-\delta)u\overline{v}\,ds -\int_{\cED}
\left({\partial_n u}\overline{v}+u{\partial_n \overline{v}}\right)\,ds.\label{Ahalt}
\end{eqnarray}
Here $\nabla_h$ is the piecewise defined gradient and  $\partial_n u =\nabla_h u\cdot n$ element by element. In addition the right had side is given by
\[
F(v)=-\frac{1}{\eye\kappa}\int_{\cER}\delta g\,{\partial_n \overline{v}}\,ds+\int_{\cER} (1-\delta )g\overline{v}\,ds
\]
  By virtue of the regularity 
of the solution of (\ref{helm})-(\ref{imp}) noted above, it satisfies
\[
A_h(u,v)=F(v)\]
for all sufficiently smooth test functions $v$ (for example piecewise $H^2$ is sufficient).

Now suppose we wish to discretize the problem.
Let $V_h\subset \Pi_{K\in T_h}H^2(K)$ be a finite dimensional space.    If $V_h$ is chosen to consist of piecewise smooth solutions of (\ref{helm}), we have the standard TDG
and seek an approximate  $u_h\in V_h$ that satisfies
\[
A_h(u_h,v)=F(v)\quad\mbox{ for all }v\in V_h.
\]
For piecewise constant media, the space $V_h$ can be chosen in many ways.  One choice uses Bessel functions (good for conditioning but bad for computational speed because of the need for quadrature), another, more standard choice, uses plane waves (typically worse conditioned but easier to use since integrals can be computed in closed form)~\cite{HMP13I}.

However if $\epsilon(x,y)$ is non-constant on an element we cannot use simple solutions of the Helmholtz equation.  In this paper we assume that $\epsilon(x,y)$ is a smooth function on each element (but may be discontinuous between elements).  Then, as we shall shortly describe, the space $V_h$ can be constructed using ``Generalized Plane Waves'' (GPWs) that approximately satisfy the Helmholtz equation.  However we have been unable to prove convergence for the standard TDG method in this case, and instead add a stabilizing term to the sesquilinear form so define
\[
B_h(u,v)=A_h(u,v)+\frac{i}{\kappa^2}\int_\Omega\gamma (\Delta u+\kappa^2\epsilon u)\overline{(\Delta v+\kappa^2\epsilon v)}\,dA.
\]
Here $\gamma>0$ is a new penalty parameter that is a piecewise constant function of position on 
the  mesh. We note that the new term vanishes on elements with constant material coefficients allowing plane waves to be  used and their the method reduces to the standard TDG.  Now we seek $u_h\in V_h$ such that
\begin{equation}\label{Bhelm}
B_h(u_h,v)=F(v)\quad\forall v\in V_h.
\end{equation}

In the next section we describe how to construct GPWs element by element and hence complete the specification of the method.

\section{Generalized Plane Waves}\label{GPW}
In this section we focus specifically on GPWs. Firstly we describe the design process, including an explicit algorithm to build a local set of GPWs on a given element of the mesh $\mathcal T_h$. Secondly we turn to interpolation  of a solution of \eqref{helm} and prove error estimates. We provide various interpolation properties of such a set of local GPWs on a given element of the mesh, and derive a global interpolation property on the whole domain $\Omega$ by piecewise GPWs. Thirdly we prove a result on approximation of the space of bi-variate polynomials of degree 1 by GPWs. This result will be useful for the error analysis.

\subsection{Design and interpolation properties}
GPWs have been introduced in \cite{LM_thesis,IGD2013}. They generalize the use of classical plane waves, as exact solutions of an equation with piecewise constant coefficients, to the case of variable coefficients. The GPWs are not exact solutions of (\ref{helm}) but approximately solve the equation element by element.
 Their design process is based on a Taylor expansion and ensures that the homogeneous equation is locally satisfied up to a given order on each element $K$ of the mesh. 

On a given element $K$, consider the centroid $(x_{K},y_{K})$. A GPW on $K$ is a function $\varphi = e^{P}$ where
 \begin{equation}\label{eq:polnotation}%polynom notation
  P(x,y)=\sum_{i=0}^{{\rm{}d}_P} \sum_{j=0}^{{\rm{}d}_P-i} \lu{i}{j} \left(x-x_K\right)^i \left(y-y_K\right)^j,
  \end{equation}
  ${\rm{}d}_P$ being the total degree of the polynomial $P$. A GPW is designed to be an approximate solution of the Helmholtz equation: the polynomial coefficients $\left\{ \lu{i}{j},0\leq i+j\leq {\rm d}_P  \right\}$ are computed from the Taylor expansion of the variable coefficient $\epsilon$ in order for the function $\varphi = e^{P}$ to satisfy
\begin{equation}\label{eq:-lapl+al}
[ \Delta + \kappa^{2}\epsilon ] e^{P(x,y)} = O \left(\| (x,y)-(x_K,y_K) \|^q\right).
\end{equation}
The parameter $q$ is the order of approximation of the equation. Canceling all the terms of order less than $q$ in the Taylor expansion \eqref{eq:-lapl+al} is equivalent to a non linear system of $q(q+1)/2$ non linear equations. The unknowns of this system are the $({\rm{}d}_P+1)({\rm{}d}_P+2)/2$ coefficients of $P$. Setting simultaneously ${\rm{}d}_P = q+1$ and giving the values of the $2q+3$ coefficients $\left\{ \lu{i}{j}, i\in \{0,1\}, 0\leq j \leq q+1-i \right\}$ leads to a unique solution of the non linear system. This solution is explicitly expressed as
\begin{equation}\label{eq:IF}%Induction Formula
\begin{array}{l}
\forall (i,j) \text{ s.t. } 0\leq i+j \leq q-1, \\
\displaystyle \lu{i+2}{j} 
= \frac1{(i+2)(i+1)}\Bigg(-\kappa^2\frac{\dux^i \duy^j \epsilon\left(x_K,y_K\right)}{i!j!} -  (j+2)(j+1) \lu{i}{j+2} 
\\ \displaystyle \phantom{\frac{\dux^i\beta\left(\mathbf{g}_K\right) \duy^j \beta\left(\mathbf{g}_K\right)}{i!j!} = } 
-\sum_{k=0}^{i} \sum_{l=0}^{j} (i-k+1)(k+1) \lu{i-k+1}{j-l} \lu{k+1}{l}
\\ \displaystyle \phantom{\frac{\dux^i\beta\left(\mathbf{g}_K\right) \duy^j \beta\left(\mathbf{g}_K\right)}{i!j!}  = } 
-\sum_{k=0}^{j} \sum_{l=0}^{i} (j-k+1)(k+1) \lu{i-l}{j-k+1} \lu{l}{k+1}\Bigg),
\end{array}
\end{equation} 
where $\partial_x=\partial/\partial_x$ and $\partial_y=\partial/\partial y$.
More precisely, as defined in \cite{IG2015}, a GPW at $(x_K,y_K)$ corresponds to the following normalization :
\begin{itemize}
\item[$\bullet$] $\lambda_{0,0}=0$,
\item[$\bullet$] $(\luoz,\luzo)=N (\cos \theta,\sin\theta)$, for some $N\in\mathbb C$ and $\theta\in\mathbb R$,
\item[$\bullet$] $ \lu{i}{j}=0$ for $i\in\{0,1\}$ and $1<i+j\leq q+1$.
\end{itemize}
  
A local set of linearly independent GPWs is then obtained for a given value of $N$ by considering $p$ equi-spaced directions $\theta_{l} = 2\pi(l-1)/p$ for $1\leq l \leq p$. The interpolation properties of this set of functions are the main topic of  \cite{IG2015}. The main result of that paper provides a sufficient condition on the parameters p and q to achieve a high order interpolation of a smooth solution of \eqref{helm} by GPWs, as well as a high order interpolation of its gradient. We will denote by $\GPWK$ the space spanned by the $p$ GPWs corresponding to $\theta_{l} = 2\pi(l-1)/p$ for $1\leq l \leq p$ and $N=\sqrt{ -\kappa^2 \epsilon(x_K,y_K)}$, or $N=\imath \kappa\sqrt{ \epsilon(x_K,y_K)}$. Let ${\mathcal C}^k(S)$ denote the set of functions with $k$ continuous derivatives on a set $S$. As a reminder, with the present notation, the interpolation result reads:
\begin{theorem}\label{th:u-ua}
Consider $K\in \mathcal T_h$ together with $n\in\mathbb N$ such that $n>0$. Assume that $q\geq n+1$, $p=2n+1$ and $\mathbf{g}_K=(x_K,y_K)\in K$ is the centroid of $K$. 
Finally suppose the coefficient $\epsilon \in\mathcal C^{q-1}(K)$.
%$\epsilon \in\mathcal C^{q-1}$ at $\mathbf{g}_K$.
Consider a solution $u$ of scalar wave equation \eqref{helm}, satisfying $u\in \mathcal C^{n+1}$. %Consider then the space of $p$ GPWs approximating \eqref{helm} at $\mathbf{g}_K$ at order $q$. 
Then there is a function $u_a\in \GPWK$ implicitly depending on $\epsilon$ and its derivatives, and a constant $C(\kappa,K,n)$, implicitly depending on  $\epsilon$ and its derivatives as well, such that: for all $\mathbf{m}\in K$
\e{\label{eq:gradumua}
\left\{ 
\begin{array}{l}
\left| u\left(\mathbf{m}\right)-u_a\left(\mathbf{m}\right)\right| \leq C(\kappa,K,n) \left|\mathbf{m}-\mathbf{g}_K\right|^{n+1} \left\| u \right\|_{\mathcal C^{n+1}(K)} ,\\%\phantom \nabla 
\left\| \nabla u\left(\mathbf{m}\right)-\nabla u_a\left(\mathbf{m}\right)\right\| \leq C(\kappa,K,n) \left|\mathbf{m}-\mathbf{g}_K\right|^{n} \left\| u \right\|_{\mathcal C^{n+1}(K)}. 
\end{array}
\right.
}
\end{theorem}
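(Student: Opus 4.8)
The plan is to reduce both estimates in \eqref{eq:gradumua} to a single Taylor--matching statement at the centroid $\mathbf g_K$, and then to solve the matching by an elementary linear--algebra argument that exploits the equispaced directions. Throughout write $\varphi_l=e^{P_l}$ for the $p=2n+1$ GPWs spanning $\GPWK$, set $\xi=x-x_K$, $\eta=y-y_K$, and let $\Pi_n$ denote truncation of a Taylor expansion at $\mathbf g_K$ to degree $n$. The reduction is this: it suffices to produce $u_a=\sum_{l=1}^{p}c_l\varphi_l\in\GPWK$ with $\Pi_n u_a=\Pi_n u$ and $\sum_l|c_l|\le C\|u\|_{\mathcal C^n(K)}$. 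Indeed, $u-u_a\in\mathcal C^{n+1}(K)$ then vanishes to order $n$ at $\mathbf g_K$ and $\nabla(u-u_a)$ vanishes to order $n-1$, so Taylor's theorem with remainder (the segment $[\mathbf g_K,\mathbf m]$ lies in the convex triangle $K$) gives precisely the two bounds with $\|u-u_a\|_{\mathcal C^{n+1}(K)}$ in place of $\|u\|_{\mathcal C^{n+1}(K)}$; and $\|u_a\|_{\mathcal C^{n+1}(K)}\le\big(\sum_l|c_l|\big)\max_l\|\varphi_l\|_{\mathcal C^{n+1}(K)}\le C\|u\|_{\mathcal C^n(K)}\le C\|u\|_{\mathcal C^{n+1}(K)}$, which closes the estimate (all constants here absorb the dependence on $\kappa$, $K$, $n$, and the derivatives of $\epsilon$ at $\mathbf g_K$, which enter through the polynomials $P_l$).

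Next I would set up the algebraic structure. Taylor--expanding \eqref{helm} about $\mathbf g_K$ shows that the degree-$n$ Taylor polynomial of any $\mathcal C^{n+1}$ solution lies in a fixed subspace $H_n$ of bivariate polynomials of degree $\le n$: the coefficients $\partial_x^i\partial_y^j(\cdot)(\mathbf g_K)$ with $i\in\{0,1\}$ and $i+j\le n$ — there are $(n+1)+n=2n+1$ of them, call these the seed data — are free, while every remaining coefficient is recovered from them by the linear recursion obtained by equating to zero the Taylor coefficients of $\Delta u+\kappa^2\epsilon u$ (the linear analogue of \eqref{eq:IF}); a direct count gives $\dim H_n=2n+1=p$. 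Because the GPW construction forces $[\Delta+\kappa^2\epsilon]\varphi_l=O(|(\xi,\eta)|^q)$ with $q\ge n+1$, the Taylor coefficients of this residual of degree $\le n-1$ vanish, hence the degree-$n$ Taylor polynomial of each $\varphi_l$ also obeys the recursion, i.e. $\Pi_n u,\ \Pi_n\varphi_l\in H_n$, so $\Pi_n u_a\in H_n$ for any choice of $(c_l)$. Since an element of $H_n$ is uniquely determined by its seed data, the matching $\Pi_n u_a=\Pi_n u$ is equivalent to the $(2n+1)\times(2n+1)$ linear system requiring the seed data of $u_a$ to equal that of $u$.

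The point where the GPW normalization does the real work is the computation of the seed data of $\varphi_l$. On the line $\xi=0$ the normalization ($\lambda_{0,0}=0$, $\lambda_{0,j}=0$ for $2\le j$, $\lambda_{1,j}=0$ for $1\le j$) collapses the phase to $P_l(0,\eta)=\lambda_{0,1}\eta=N\sin\theta_l\,\eta$ and its $x$-derivative to the constant $(\partial_xP_l)(0,\eta)=\lambda_{1,0}=N\cos\theta_l$, so that $\varphi_l(x_K,y_K+\eta)=e^{N\sin\theta_l\,\eta}$ and $\partial_x\varphi_l(x_K,y_K+\eta)=N\cos\theta_l\,e^{N\sin\theta_l\,\eta}$; hence
\[
\partial_y^j\varphi_l(\mathbf g_K)=(N\sin\theta_l)^j\quad(0\le j\le n),\qquad
\partial_x\partial_y^j\varphi_l(\mathbf g_K)=N\cos\theta_l\,(N\sin\theta_l)^j\quad(0\le j\le n-1).
\]
Dividing the rows of the system by the factors $N^{j}$ or $N^{j+1}$ (nonzero, since $\epsilon>0$ and $\kappa>0$ give $N\neq0$), the matrix becomes the matrix of values at the nodes $\theta_l=2\pi(l-1)/(2n+1)$ of the $2n+1$ trigonometric functions $\{\sin^j\theta:0\le j\le n\}\cup\{\cos\theta\sin^j\theta:0\le j\le n-1\}$. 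Expanding in complex exponentials, these cover complementary parities and together produce every mode $e^{\mathrm i k\theta}$ with $|k|\le n$, so they form a basis of the $(2n+1)$-dimensional space of trigonometric polynomials of degree $\le n$; and any nonzero such polynomial has at most $2n$ zeros in $[0,2\pi)$, so its values at the $2n+1$ distinct equispaced nodes determine it, i.e. the matrix is nonsingular. Thus $(c_l)$ exists and is unique, with $\sum_l|c_l|$ bounded by the operator norm of the inverse times $\|u\|_{\mathcal C^n(K)}$; this yields the constant $C(\kappa,K,n)$ (implicitly depending on $\epsilon$ through $N$ and through $\|\varphi_l\|_{\mathcal C^{n+1}(K)}$), and setting $u_a=\sum_l c_l\varphi_l$ and invoking the first paragraph finishes the proof.

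The genuinely load-bearing steps are the two structural claims of the second paragraph — that the degree-$n$ truncation of a GPW lands in the recursion space $H_n$ (this is where $q\ge n+1$ is used) and that $\dim H_n=p=2n+1$, so that the matching system is square — together with the unisolvence of the equispaced directions in the third paragraph; the rest is routine Taylor--remainder bookkeeping. I would expect the main obstacle not to be any single hard estimate but rather the need to keep the chain of reductions tight, and I would note, as the authors do, that the dependence of $C(\kappa,K,n)$ on $\kappa$ is not made explicit.
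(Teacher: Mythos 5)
Your argument is sound, but note first that the paper does not actually prove Theorem~\ref{th:u-ua}: it is imported verbatim ("as a reminder, \dots the interpolation result reads") from \cite{IG2015}, so the only comparison available is with that reference. Your reconstruction follows the same core mechanism as the cited proof: reduce to matching the degree-$n$ Taylor polynomial of $u$ at the centroid, observe that the PDE (respectively the design condition \eqref{eq:-lapl+al} with $q\geq n+1$) forces the truncated Taylor expansions of both $u$ and the GPWs into a $(2n+1)$-dimensional "recursion" space parametrized by the seed derivatives $\partial_x^i\partial_y^j(\cdot)(\mathbf g_K)$, $i\in\{0,1\}$, and invert the resulting square system in the coefficients $(c_l)$, with the normalization $\lambda_{0,0}=0$, $\lambda_{i,j}=0$ for $i\in\{0,1\}$, $i+j>1$ making the seed data of $\varphi_l$ the explicit trigonometric quantities $(N\sin\theta_l)^j$ and $N\cos\theta_l(N\sin\theta_l)^j$. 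Where you genuinely diverge is the invertibility step: \cite{IG2015} establishes nonsingularity by explicit manipulation of the matrix $\mathsf M_p$ (the same Vandermonde-type object that reappears in the proof of Lemma~\ref{linfun} here), whereas you argue that the $2n+1$ row functions span the trigonometric polynomials of degree $\leq n$ and invoke zero-counting at the $2n+1$ equispaced nodes; this is more elementary and makes transparent why $p=2n+1$ equispaced directions is exactly the right count, at the price of not producing the explicit inverse (i.e., the coefficients $\alpha_k^{(j)}$) that the paper later needs for Lemma~\ref{linfun}. Your side remark is also accurate: your route only uses the vanishing of the residual's Taylor coefficients up to degree $n-2$, so it does not by itself explain why the theorem insists on $q\geq n+1$ rather than a weaker condition; that stronger hypothesis is what delivers the extra conclusion \eqref{eq:opua} of Theorem~\ref{th:u-ua2}. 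No gap affects the two estimates actually claimed in \eqref{eq:gradumua}.
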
 
The following interpolation property of higher order derivatives stems directly from the proof of the previous theorem.
\begin{theorem}\label{th:u-ua2}
Consider $K\in \mathcal T_h$ together with $n\in\mathbb N$ such that $n>0$. Assume that $q\geq n+1$, $p=2n+1$ and $\mathbf{g}_K=(x_K,y_K)\in K$ is the centroid of $K$. 
Finally suppose the coefficient $\epsilon \in\mathcal C^{q-1}(K)$.%$\epsilon\in\mathcal C^{q-1}$ at $\mathbf{g}_K$.
Consider a solution $u$ of scalar wave equation \eqref{helm}, satisfying $u\in \mathcal C^{n+1}$. %Consider then the space of $p$ GPWs approximating \eqref{helm} at $\mathbf{g}$ at order $q$. 
Then the function $u_a\in \GPWK$ and the constant $C(\kappa,K,n)$ provided by Theorem \ref{th:u-ua} also satisfies : for all $\mathbf{m}\in K$ and all $j$ such that $0\leq j \leq k$
\e{\label{eq:derumua}
\left| \dx^{j}\dy^{k-j}u\left(\mathbf{m}\right)-\dx^{j}\dy^{k-j}u_a\left(\mathbf{m}\right)\right| \leq C(\kappa,K,n)\frac{(n+1)!}{(n+1-k)!} \left|\mathbf{m}-\mathbf{g}_K\right|^{n+1-k} \left\| u \right\|_{\mathcal C^{n+1}(K)} ,
}
where $k\leq n$.
Moreover there is a constant $\mathfrak C(\kappa,K,n)$ such that for all $\mathbf{m}\in K$ 
\e{\label{eq:opua}
\left| [\Delta +\kappa^2\epsilon]u_a\left(\mathbf{m}\right)\right| \leq \mathfrak C(\kappa,K,n) \left|\mathbf{m}-\mathbf{g}_K\right|^{n+1} \left\| u \right\|_{\mathcal C^{n+1}(K)} .
}
\end{theorem}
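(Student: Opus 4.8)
The plan is to reread the proof of Theorem~\ref{th:u-ua} and extract a little more from it. That proof constructs $u_a$ as an explicit linear combination $u_a=\sum_{l=1}^{p}c_l\varphi_l\in\GPWK$ of the $p=2n+1$ generating GPWs $\varphi_l=e^{P_l}$, and what it actually establishes is that the degree-$n$ Taylor polynomial of $u_a$ at $\mathbf g_K$ agrees with that of $u$ (up to an error whose contribution is absorbed because $q\ge n+1$ and $p=2n+1$), i.e. $\dx^{a}\dy^{b}(u-u_a)(\mathbf g_K)=0$ for all $a+b\le n$, together with a bound $\sum_{l=1}^{p}|c_l|\le C(\kappa,K,n)\|u\|_{\mathcal C^{n+1}(K)}$ on the coefficients, the constant depending through the defining linear system on $\epsilon$ and its derivatives at $\mathbf g_K$. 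First I would record the consequence that, since each $\varphi_l\in\mathcal C^{\infty}(K)$ with all derivatives up to any fixed order bounded in terms of $\kappa,K,n$, this upgrades to $\|u_a\|_{\mathcal C^{n+1}(K)}\le C(\kappa,K,n)\|u\|_{\mathcal C^{n+1}(K)}$ and hence $\|u-u_a\|_{\mathcal C^{n+1}(K)}\le C(\kappa,K,n)\|u\|_{\mathcal C^{n+1}(K)}$.

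For \eqref{eq:derumua} I would then argue as follows. Fix $k\le n$, a split $j$ with $0\le j\le k$, and set $w=u-u_a$. Since the degree-$n$ Taylor polynomial of $w$ at $\mathbf g_K$ vanishes, all derivatives of $\dx^{j}\dy^{k-j}w$ of order $\le n-k$ vanish at $\mathbf g_K$. Writing $w(\mathbf g_K+\mathbf h)=\sum_{|\alpha|=n+1}\mathbf h^{\alpha}g_{\alpha}(\mathbf h)$ via the integral form of Taylor's remainder, where the $g_\alpha$ together with their first $k$ derivatives are bounded by an absolute multiple of $\|w\|_{\mathcal C^{n+1}(K)}$, Leibniz's rule shows that the lowest-order surviving term of $\dx^{j}\dy^{k-j}w(\mathbf g_K+\mathbf h)$ has degree $n+1-k$ in $\mathbf h$, with a combinatorial factor (coming from $\dx^{j}\dy^{k-j}\mathbf h^{\alpha}$, a descending product of $k$ factors each at most $n+1$) bounded by $\frac{(n+1)!}{(n+1-k)!}$. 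Inserting the $\mathcal C^{n+1}(K)$-bound on $w$ from the first step yields \eqref{eq:derumua}; the cases $k=0$ and $k=1$ reproduce \eqref{eq:gradumua}.

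For \eqref{eq:opua} the point is that one cannot simply differentiate \eqref{eq:derumua}: although $u$ solves \eqref{helm} exactly, so that $[\Delta+\kappa^2\epsilon]u_a=[\Delta+\kappa^2\epsilon](u_a-u)$, the estimate \eqref{eq:derumua} with $k=2$ controls $\Delta(u_a-u)$ only to order $n-1$. Instead I would invoke the defining property \eqref{eq:-lapl+al} of the GPWs directly: since $q\ge n+1$, each generator obeys $|[\Delta+\kappa^2\epsilon]\varphi_l(\mathbf m)|\le C_l(\kappa,K,n)|\mathbf m-\mathbf g_K|^{q}\le C_l(\kappa,K,n)h_K^{q-n-1}|\mathbf m-\mathbf g_K|^{n+1}$ on $K$, so by linearity $|[\Delta+\kappa^2\epsilon]u_a(\mathbf m)|\le\bigl(\sum_l|c_l|\bigr)\max_l C_l(\kappa,K,n)h_K^{q-n-1}|\mathbf m-\mathbf g_K|^{n+1}$, and combining with the coefficient bound of the first step gives \eqref{eq:opua}.

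The main obstacle is the first paragraph: I must confirm that the proof of Theorem~\ref{th:u-ua} really delivers the sharper statement that the degree-$n$ Taylor polynomials of $u$ and $u_a$ match and that the GPW coefficients are controlled by $\|u\|_{\mathcal C^{n+1}(K)}$, rather than only the two displayed inequalities---and, if the Taylor matching there is only approximate, that the mismatch is of order at least $n+1-k$ in $|\mathbf m-\mathbf g_K|$ so it does not spoil \eqref{eq:derumua}; this is precisely where the hypotheses $q\ge n+1$ and $p=2n+1$ are used, and it is the content already worked out in \cite{IG2015}. Everything after that is the elementary Taylor-remainder and Leibniz bookkeeping above, together with the observation that the Helmholtz residual of $u_a$ is inherited termwise from the GPW design condition and not from the interpolation error.
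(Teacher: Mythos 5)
Your proposal follows the same two-step route as the paper's (much terser) proof: \eqref{eq:derumua} comes from the Taylor expansion of $u-u_a$ about $\mathbf{g}_K$ ``exactly as for the gradient'' in Theorem~\ref{th:u-ua}, and \eqref{eq:opua} comes from combining the design residual $|[\Delta+\kappa^2\epsilon]\varphi_l(\mathbf{m})|\le C_l|\mathbf{m}-\mathbf{g}_K|^{q}$ of each generator with the coefficient bound $|\mathsf X_l|\le C(\kappa,K,n)\|u\|_{\mathcal C^{n+1}}$ imported from \cite{IG2015} and with $q\ge n+1$; the ``main obstacle'' you flag (exact matching of the degree-$n$ Taylor polynomials and control of the GPW coefficients) is precisely what the paper takes from that reference rather than reproving. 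The one step of yours that does not survive scrutiny is the bookkeeping for \eqref{eq:derumua}: writing $w=u-u_a$ as $\sum_{|\alpha|=n+1}\mathbf{h}^\alpha g_\alpha(\mathbf{h})$ and then applying Leibniz requires bounding $k$ derivatives of the integral-remainder coefficients $g_\alpha$, which involve derivatives of $w$ of order up to $n+1+k$ --- unavailable since $u$ is only assumed $\mathcal C^{n+1}$. The standard repair, and what the paper's one-line proof amounts to, is to apply Taylor's theorem directly to $v=\partial_x^{j}\partial_y^{k-j}w\in\mathcal C^{n+1-k}(K)$, whose derivatives of order $\le n-k$ vanish at $\mathbf{g}_K$ because the degree-$n$ Taylor polynomials of $u$ and $u_a$ coincide; the remainder of order $n-k$ then gives the exponent $n+1-k$ and the ratio $(n+1)!/(n+1-k)!$ relative to the $k=0$ constant directly. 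With that substitution your argument is correct and coincides with the paper's.
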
 
\begin{proof}
%(Of \eqref{eq:derumua}) 
The interpolation property of GPWs for any derivative of $u$ directly stems from the Taylor expansion of $u-u_{a}$, exactly as for the gradient and this proves \eqref{eq:derumua}.

%(First proof of \eqref{eq:opua} with $-$ sign)
%Since $[\Delta +\kappa^2\epsilon]u_a=[\Delta +\kappa^2\epsilon](u_a-u)$, 
%Equation \eqref{eq:opua} is a direct consequence of \eqref{eq:derumua} for $k=2$ and \eqref{eq:gradumua}. The constant $K$ is given by
%$$K(N,\Omega,n) = (\kappa^2\|\epsilon\|_{\infty} +2n(n+1))C(N,\Omega,n).$$

%(Second proof of \eqref{eq:opua} with $+$ sign)
The design of GPWs directly yields that for all $l$ such that $1\leq l \leq p$ the corresponding GPW satisfies
$$
\left| [\Delta +\kappa^2\epsilon]\varphi_{l} \right| \leq C_{l} \left|\mathbf{m}-\mathbf{g}_K\right|^{q }.
$$
Moreover  $\displaystyle u_a= \sum_{l=1}^{2n+1} \mathsf X_l \varphi_l$ and it was already noticed in \cite{IG2015} that  $|\mathsf X_l|\leq C(\kappa,K,n)\|u\|_{\mathcal C^{n+1}}$. As a result
$$
\left| [\Delta +\kappa^2\epsilon]u_a\left(\mathbf{m}\right)\right| 
=\left| \sum_{l=1}^{2n+1} \mathsf X_l [\Delta +\kappa^2\epsilon]\varphi_l \right|
\leq C(\kappa,K,n)\|u\|_{\mathcal C^{n+1}}\left|\mathbf{m}-\mathbf{g}_K\right|^{q } \sum_{l=1}^{2n+1} C_{l}
$$
and so \eqref{eq:opua} holds $\displaystyle \mathfrak C (\kappa,K,n) =C(\kappa,K,n) \sum_{l=1}^{2n+1} C_{l} $.
\end{proof}

The last step to build, from the local functions spaces $\GPWK$, a set of GPWs on the whole domain $\Omega$: the GPWs space $V_h$ is naturally defined as $ \prod_{K\in T_h}\GPWK$. Note that $p$ and $q$ can vary from element to element.

As a result, we have the following estimate for $[\Delta +\kappa^2\epsilon] (u-v_h)$, where $u$ is a smooth solution of Equation \eqref{helm}:
\begin{lemma}\label{helm-est}
Suppose that $u$ is a solution of scalar wave equation \eqref{helm} which belongs to $\mathcal C^{n+1}(\Omega)$. %Consider then the space of local GPWs approximating \eqref{helm} at $\mathbf{g}_K$ at order $q$. 
Then the function $v_h\in V_h=\prod_{K\in T_h}\GPWK$,  provided element by element by Theorem \ref{th:u-ua}, satisfies :
 there exists a constant $C$ independent of $h$ such that
\[
\Vert [\Delta +\kappa^2\epsilon] (u-v_h) \Vert_{L^2(\Omega)} \leq C\mbox{area}(\Omega)^{1/2}\left(\max_{K\in T_h} h_K\right)^q\Vert u\Vert_{{\mathcal C}^q(\Omega)}
\]
where $h_K$ is the radius of $K$.
\end{lemma}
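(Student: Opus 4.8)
The plan is to read off the lemma from the elementwise estimate of Theorem~\ref{th:u-ua2} and assemble it in the $L^2(\Omega)$ norm; the only genuine work is keeping the elementwise constants uniform in $h$.

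First I would reduce everything to a single element. Since $u$ solves \eqref{helm}, on every $K\in\mathcal T_h$ we have $[\Delta+\kappa^2\epsilon]u=0$, so, writing $u_a:=v_h|_K\in\GPWK$ for the interpolant supplied by Theorem~\ref{th:u-ua}, one has $[\Delta+\kappa^2\epsilon](u-v_h)\big|_K=-[\Delta+\kappa^2\epsilon]u_a$. Because $\mathcal T_h$ is a partition of $\Omega$,
\[
\Vert[\Delta+\kappa^2\epsilon](u-v_h)\Vert_{L^2(\Omega)}^2=\sum_{K\in\mathcal T_h}\Vert[\Delta+\kappa^2\epsilon]u_a\Vert_{L^2(K)}^2 ,
\]
so it suffices to control each term on the right, which is exactly the quantity estimated in Theorem~\ref{th:u-ua2}.

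Next I would invoke that pointwise bound. The statement \eqref{eq:opua} is phrased with the exponent $n+1$, but its proof actually yields the sharper rate $|\mathbf{m}-\mathbf{g}_K|^{q}$: the GPW design gives $|[\Delta+\kappa^2\epsilon]\varphi_l(\mathbf{m})|\le C_l|\mathbf{m}-\mathbf{g}_K|^{q}$ for each of the $p=2n+1$ basis functions, while the expansion coefficients obey $|\mathsf X_l|\le C(\kappa,K,n)\Vert u\Vert_{\mathcal C^{n+1}(K)}$ (as recalled there from \cite{IG2015}); this is the version the lemma needs. Squaring, integrating over $K$, and using $|\mathbf{m}-\mathbf{g}_K|\le\mbox{diam}(K)\le c\,h_K$ for $\mathbf{m}\in K$ (with $c$ depending only on the shape regularity of $\mathcal T_h$), I get
\[
\Vert[\Delta+\kappa^2\epsilon]u_a\Vert_{L^2(K)}^2\le \mathfrak C(\kappa,K,n)^2\,\Vert u\Vert_{\mathcal C^{n+1}(K)}^2\,(c\,h_K)^{2q}\,\mbox{area}(K).
\]
Summing over $K$, bounding $h_K\le\max_{K}h_K$ and $\Vert u\Vert_{\mathcal C^{n+1}(K)}\le\Vert u\Vert_{\mathcal C^{n+1}(\Omega)}\le\Vert u\Vert_{\mathcal C^{q}(\Omega)}$ (legitimate since $q\ge n+1$, with the bound simply vacuous if the right-hand side is infinite), and using $\sum_K\mbox{area}(K)=\mbox{area}(\Omega)$, then taking square roots, produces precisely the asserted inequality.

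The one point that requires care — and the only real obstacle — is that the final constant be independent of $h$, i.e.\ that $\sup_{K\in\mathcal T_h}\mathfrak C(\kappa,K,n)<\infty$ uniformly over the mesh family. Recall $\mathfrak C(\kappa,K,n)=C(\kappa,K,n)\sum_{l=1}^{2n+1}C_l$. The design constants $C_l$ come from a Taylor remainder of $[\Delta+\kappa^2\epsilon]e^{P}$ together with the recursion \eqref{eq:IF}, hence depend only on $\kappa$, $p$, $q$ and on bounds for $\epsilon$ and its derivatives up to order $q-1$, which are uniformly controlled on each smoothness piece of $\overline\Omega$ regardless of the element; the constant $C(\kappa,K,n)$ controlling $|\mathsf X_l|$ comes from the uniform invertibility of the change of basis between the GPWs and the Taylor data of $u$ established in \cite{IG2015}, which under the local quasi-uniformity hypothesis on $\mathcal T_h$ is also uniform in $K$. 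I would therefore cite these uniformities from \cite{IGD2013,IG2015}, or, for a self-contained argument, re-examine \eqref{eq:IF} to record explicitly that $|\lambda_{i,j}|$ is bounded in terms of $\kappa$, $q$ and $\Vert\epsilon\Vert_{\mathcal C^{q-1}}$ alone; granting this, $\sup_K\mathfrak C(\kappa,K,n)$ together with the geometric constant $c$ furnishes the $h$-independent constant $C$ in the statement.
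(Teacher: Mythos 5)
Your proof is correct and is exactly the argument the paper intends: Lemma~\ref{helm-est} is stated without proof as a direct consequence of Theorem~\ref{th:u-ua2}, obtained by applying the pointwise bound on $[\Delta+\kappa^2\epsilon]u_a$ elementwise (using that $[\Delta+\kappa^2\epsilon]u=0$), integrating over each $K$, and summing. You also rightly observe that the rate $q$ needed here comes from the bound $\left|[\Delta+\kappa^2\epsilon]\varphi_l\right|\leq C_l\left|\mathbf{m}-\mathbf{g}_K\right|^{q}$ established inside the proof of \eqref{eq:opua} rather than from the exponent $n+1$ in its statement, and your discussion of the uniformity of the constants over $K$ addresses the one point the paper leaves implicit.
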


\subsection{Approximation of linear polynomials}
The result \cite[Lemma 3.10]{git09} addresses the approximation of bi-variate polynomials of degree 1 by classical plane waves, and here we are interested in the approximation of bi-variate polynomials of degree 1 by GPWs. This result is needed to apply the $h$-based
analysis of \cite{git09} or \cite{kapita14}.
\begin{lemma}\label{linfun}
Consider $\hat K\in [0,1]^2$ the reference element. Suppose $n\in\mathbb N$ is such that $n\geq 2$. For $p=2n+1$ and $q\geq n+1$ there is a constant $C$ independent of $\kappa$ (but not of $p$) such that
$$
\inf_{v\in \GPWKhat} \|f-v\|_{0,\hat K}
\leq C\kappa^2|\epsilon(x_{\hat K},y_{\hat K})|\|f\|_{0,\hat K},\ \forall f\in \mathcal P_1(\mathbb R^2).
$$ 
\end{lemma}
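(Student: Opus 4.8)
I would adapt the plane-wave argument of \cite[Lemma~3.10]{git09} to GPWs. Write $\mathbf g=(x_{\hat K},y_{\hat K})$ for the centroid, $\mathbf d_l=(\cos\theta_l,\sin\theta_l)$ with $\theta_l=2\pi(l-1)/p$, and recall that each GPW $\varphi_l=e^{P_l}$ in $\GPWKhat$ is normalised so that $\varphi_l(\mathbf g)=1$ and $\nabla\varphi_l(\mathbf g)=(\lambda_{1,0},\lambda_{0,1})=N\mathbf d_l$, where $N^2=-\kappa^2\epsilon(\mathbf g)$, hence $|N|^2=\kappa^2|\epsilon(\mathbf g)|$. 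Given $f(\mathbf x)=a_0+\mathbf a\cdot(\mathbf x-\mathbf g)\in\mathcal P_1(\mathbb R^2)$ (with $N\neq 0$ since $\epsilon>0$), and using the equi-spaced identities $\sum_{l=1}^p\mathbf d_l=\mathbf 0$ and $\sum_{l=1}^p\mathbf d_l\mathbf d_l^{\top}=\tfrac p2 I$ (valid because $p=2n+1\ge 5$), I would set
\[
\mathsf X_l=\frac{a_0}{p}+\frac{2}{Np}\,\mathbf a\cdot\mathbf d_l,\qquad v:=\sum_{l=1}^p\mathsf X_l\,\varphi_l\in\GPWKhat .
\]
Then $\sum_l\mathsf X_l=a_0$ and $N\sum_l\mathsf X_l\mathbf d_l=\mathbf a$, so $v$ and $f$ share the same Taylor polynomial of degree $\le 1$ at $\mathbf g$, and therefore $v-f=\sum_{l=1}^p\mathsf X_l\,\rho_l$ with $\rho_l:=\varphi_l-1-N\mathbf d_l\cdot(\mathbf x-\mathbf g)$.

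The core of the argument is controlling $\rho_l$ on the fixed element $\hat K$, for which I would exploit the GPW construction. Writing $\rho_l=(e^{P_l}-1-P_l)+R_l$, with $R_l$ the part of $P_l$ of degree $\ge 2$, the recursion \eqref{eq:IF} and the normalisation give, by induction on the total degree, that $\lambda_{2,0}=\lambda_{1,1}=\lambda_{0,2}=0$ (the $\lambda_{2,0}$ identity using $\lambda_{1,0}^2+\lambda_{0,1}^2=N^2=-\kappa^2\epsilon(\mathbf g)$) and that every higher coefficient $\lambda_{i,j}$ is $O(|N|^2)$ on $\hat K$, each source term being $\kappa^2\partial_x^i\partial_y^j\epsilon(\mathbf g)=-N^2\partial_x^i\partial_y^j\epsilon(\mathbf g)/\epsilon(\mathbf g)$; with $|P_l|=O(1)$ and $|e^{P_l}-1-P_l|\le\tfrac12|P_l|^2e^{|P_l|}$ this yields $\|\rho_l\|_{0,\hat K}\le C|N|^2$. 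A closer look at the same recursion shows that the order-$N^2$ part of $\rho_l$ is an even trigonometric polynomial in $\theta_l$ of degree at most $2$ (the $\theta_l$-dependence of $\lambda_{i,j}$ with $i+j\ge 3$ enters only at orders $\ge N^3$, while the only order-$N^2$ term in $e^{P_l}-1-P_l$ is $\tfrac{N^2}{2}(\mathbf d_l\cdot(\mathbf x-\mathbf g))^2$), with remainder $\|\tilde\rho_l\|_{0,\hat K}\le C|N|^3$. Now split $v-f=\tfrac{a_0}{p}\sum_l\rho_l+\tfrac{2}{Np}\sum_l(\mathbf a\cdot\mathbf d_l)\rho_l$: the first term is $O(|a_0||N|^2)$, and in the second the order-$N^2$ part of $\rho_l$ drops out because $\sum_l\mathbf d_l=\mathbf 0$ and $\sum_l d_{l,i}d_{l,j}d_{l,k}=0$ for $p\ge 4$, leaving $\tfrac{2}{Np}\sum_l(\mathbf a\cdot\mathbf d_l)\tilde\rho_l=O(|\mathbf a||N|^2)$. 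Since $|a_0|+|\mathbf a|\le C\|f\|_{0,\hat K}$ by norm equivalence on $\mathcal P_1(\mathbb R^2)$ over the fixed $\hat K$, adding the pieces gives $\|v-f\|_{0,\hat K}\le C|N|^2\|f\|_{0,\hat K}=C\kappa^2|\epsilon(x_{\hat K},y_{\hat K})|\|f\|_{0,\hat K}$.

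The main obstacle is the structural analysis of $\rho_l$: extracting from the nonlinear recursion \eqref{eq:IF} both the bound $\lambda_{i,j}=O(|N|^2)$ and, more delicately, the parity and degree control on the order-$N^2$ part of $\rho_l$. That evenness is exactly what cancels the $1/N$ factor forced into $\mathsf X_l$ by the first-order interpolation conditions; without it the estimate only reaches $O(|N|)$, which is why $p\ge 5$, i.e. $n\ge 2$, is needed. A secondary point is that the bounds $\|\rho_l\|_{0,\hat K}\le C|N|^2$ and $\|\tilde\rho_l\|_{0,\hat K}\le C|N|^3$ on the fixed element fold the higher powers of $|N|$ produced by \eqref{eq:IF} into $C$, so that $C$ depends on $p$ and on $\epsilon$ and its derivatives; this is consistent with the paper's convention on the $\kappa$-dependence of constants and is unproblematic in the $h$-regime for which the lemma is intended.
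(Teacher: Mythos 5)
Your argument is correct, and it rests on the same two pillars as the paper's proof: the structural fact (Lemma~\ref{lem:Pk}, extracted from the recursion \eqref{eq:IF} and the chosen normalization) that each GPW phase equals the plane-wave phase plus an $O(\tilde\kappa^2)$ correction, where $\tilde\kappa^2=\kappa^2\epsilon(x_{\hat K},y_{\hat K})=-N^2$, and the exact trigonometric identities satisfied by $p=2n+1$ equi-spaced directions. The packaging, however, is genuinely different. The paper follows \cite{git09}: it expands the powers of the linear phase in circular harmonics $K_j^m$, inverts the direction matrix $\mathsf{M}_p$, and produces three combinations $b_1,b_2,b_3$ converging to $1$, $x-x_{\hat K}$, $y-y_{\hat K}$ with $O(\tilde\kappa^2)$ error, after which the bound for general $f\in\mathcal P_1(\mathbb R^2)$ is immediate. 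You instead build the approximant of a given $f$ directly from the first- and second-moment identities of the directions and control the remainder $\rho_l$ by hand. What your version buys is an explicit treatment of the one delicate point: since the coefficients multiplying the directional part of $f$ carry a factor $1/N$, the estimate degrades to $O(|N|)$ unless the $O(N^2)$ part of $\rho_l$ is annihilated by the weighted sum $\sum_l(\mathbf a\cdot\mathbf d_l)(\cdot)$ --- your vanishing first and third moments. The paper needs exactly the same cancellation to make $b_2$ and $b_3$ accurate to $O(\tilde\kappa^2)$ rather than $O(\tilde\kappa)$ (the $n=1$ contribution $\tilde\kappa^2\sum_k\alpha_k^{(j)}f_{k,q}$ must vanish at leading order when tested against $\alpha^{(2)},\alpha^{(3)}$), but there it is carried along implicitly inside the $g_{k,q}$ remainders rather than isolated; your parity and degree analysis of the order-$N^2$ part of $\rho_l$ supplies that justification. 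Two small caveats: your phrase ``even trigonometric polynomial'' should be read as ``trigonometric polynomial containing only the frequencies $0$ and $2$'' (the cross term $\cos\theta_l\sin\theta_l$ is an odd function but has frequency $2$), since it is this frequency content together with $p\geq 5$ that kills the third-moment sums; and, as you note, the bounds on $\rho_l$ and $\tilde\rho_l$ are uniform only for $\tilde\kappa$ in a bounded range, which matches the paper's use of the lemma on the reference element with $\hat\kappa=h_K\kappa\to0$ in Corollary~\ref{lincor}.
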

\begin{remark}The proof strongly relies on the fact that the GPW space is designed with the normalization $(\luoz^k,\luzo^k) = \imath \kappa\sqrt{ \epsilon(x_{\hat K},y_{\hat K})}(\cos\theta_k,\sin\theta_k)$, for equi-spaced angles $\theta_k$, for $1\leq k \leq p$.\end{remark}
\begin{proof} For the sake of clarity we define $\tilde \kappa = \kappa\sqrt{ \epsilon(x_{\hat K},y_{\hat K})}$. Consider 
$$
b_j :=(i\tilde \kappa)^{-[j/2]}\sum_{k=1}^p \alpha_k^{(j)} \varphi_k 
$$
where the $\varphi_k$s are the GPWs, $\alpha_k^{(j)} = (\mathsf{M}_p)^{-1}$, $p=2m+1$ and $\mathsf{M}_p\in\mathbb R^{p,p}$ is defined for $1\leq k,l \leq p$ by
$$
(\mathsf M_p)_{kl} :=\left\{
\begin{array}{l}
1\text{ for }l = 1 
\\\displaystyle 
\cos\left(\frac{l}{2}\theta_k\right)\text{ for }l \text{ even}
\\\displaystyle 
\sin\left(\frac{l-1}{2}\theta_k\right)\text{ for }l \geq 3 \text{ odd}
\end{array} \right.
$$
We know that
$$
\sum_{k=1}^p \alpha_k^{(j)} \varphi_k =
\sum_{n=0}^\infty\frac{1}{n!}\sum_{k=1}^p \alpha_k^{(j)} (P_k(x,y))^n ,
$$
and, if $\hat{\mathbf x} = (\hat x,\hat y)=(x-x_{\hat K},y-y_{\hat K})$, then
$$
\begin{array}{rl}
(P_k(x,y))^n &\displaystyle=
\left(
 \begin{pmatrix}
\luoz^k\\ \luzo^k
\end{pmatrix}
\cdot
\begin{pmatrix}
\hat x \\ \hat y
\end{pmatrix}
+\tilde \kappa^2 f_{k,q}({\mathbf{x}})
\right)^n  \text{ see Lemma \ref{lem:Pk}}
\\&\displaystyle=
\left(
 \begin{pmatrix}
\luoz^k\\ \luzo^k
\end{pmatrix}
\cdot
\begin{pmatrix}
\hat x \\ \hat y
\end{pmatrix}
\right)^n
+\tilde \kappa^n\sum_{j=0}^{n-1}\begin{pmatrix}
n\\ j
\end{pmatrix}
\imath^j\tilde \kappa^{n-j}\left(
 \begin{pmatrix}
\luoz^k\\ \luzo^k
\end{pmatrix}
\cdot
\begin{pmatrix}
\hat x \\ \hat y
\end{pmatrix}
\right)^j
f_{k,q}({\mathbf{x}})^{n-j}
\\&\displaystyle=
\left(
 \begin{pmatrix}
\luoz^k\\ \luzo^k
\end{pmatrix}
\cdot
\begin{pmatrix}
\hat x \\ \hat y
\end{pmatrix}
\right)^n
+\tilde \kappa^{n+1}g_{k,q}({\mathbf{x}}),
\end{array}
$$
 the function $g_{k,q}$, defined as $g_{k,q}({\mathbf{x}}) = \sum_{j=0}^{n-1}\begin{pmatrix}
n\\ j
\end{pmatrix}
\imath^j\tilde \kappa^{n-1-j}\left(
 \begin{pmatrix}
\luoz^k\\ \luzo^k
\end{pmatrix}
\cdot
\begin{pmatrix}
\hat x \\ \hat y
\end{pmatrix}
\right)^j
f_{k,q}({\mathbf{x}})^{n-j}$, being uniformly bounded for $\mathbf x =(x,y)\in \hat K$ as $\tilde \kappa\rightarrow 0$.

We assume that $(\luoz^k,\luzo^k) = \imath \tilde \kappa(\cos\theta_k,\sin\theta_k)$. Define
$$
K_{ 0  }^n(\mathbf{x}) = \frac{1}{2\pi}\int_{-\pi}^{\pi} \left(\begin{pmatrix}
\luoz^k\\ \luzo^k
\end{pmatrix}
\cdot
\begin{pmatrix}
\hat x \\\hat  y
\end{pmatrix}\right)^n
d\theta \quad \text{ and }\forall 1\leq j\leq n:
$$
$$
K_{2j  }^n(\mathbf{x}) = \frac{1}{\pi}\int_{-\pi}^{\pi} \left(\begin{pmatrix}
\luoz^k\\ \luzo^k
\end{pmatrix}
\cdot
\begin{pmatrix}
\hat x \\ \hat y
\end{pmatrix}\right)^n \cos(j\theta)d\theta
\text{, }
K_{2j+1}^n (\mathbf{x})= \frac{1}{\pi}\int_{-\pi}^{\pi} \left(\begin{pmatrix}
\luoz^k\\ \luzo^k
\end{pmatrix}
\cdot
\begin{pmatrix}
\hat x \\ \hat y
\end{pmatrix}\right)^n \sin(j\theta)d\theta.
$$
The leading order term of $(P_k(x,y))^n$ as $\tilde \kappa\rightarrow 0$ is $\left(
 \begin{pmatrix}
\luoz^k\\ \luzo^k
\end{pmatrix}
\cdot
\begin{pmatrix}
\hat x \\ \hat y
\end{pmatrix}
\right)^n$. As shown in \cite{git09} , 
it can be written
$
\left(
 \begin{pmatrix}
\luoz^k\\ \luzo^k
\end{pmatrix}
\cdot
\begin{pmatrix}
\hat x \\ \hat y
\end{pmatrix}
\right)^n
=(\imath\tilde\kappa)^n\sum_{l=1}^{2n+1}K_l^n(\mathbf{x})\mathsf{M}_{lk}
$ so that
$$
\begin{array}{l}\displaystyle
\sum_{n=0}^\infty\frac{1}{n!}\sum_{k=1}^p \alpha_k^{(j)}
\left(
 \begin{pmatrix}
\luoz^k\\ \luzo^k
\end{pmatrix}
\cdot
\begin{pmatrix}
\hat x \\ \hat y
\end{pmatrix}
\right) ^n \\\displaystyle
=
\sum_{n=0}^\infty\frac{(\imath\tilde \kappa)^n}{n!}
\sum_{l=1}^{2n+1}K_l^n(\mathbf{x})
\sum_{k=1}^p \alpha_k^{(j)}\mathsf{M}_{lk}\\\displaystyle
=
\sum_{n=0}^m\frac{(\imath\tilde \kappa)^n}{n!}
\sum_{l=1}^{2n+1}K_l^n(\mathbf{x})
\sum_{k=1}^p \alpha_k^{(j)}\mathsf{M}_{lk}+
\sum_{n=m+1}^\infty\frac{(\imath\tilde \kappa)^n}{n!}
\sum_{l=1}^{2n+1}K_l^n(\mathbf{x})
\sum_{k=1}^p \alpha_k^{(j)}\mathsf{M}_{lk},
\end{array}
$$
and since $K_j^n=0$ if $[j/2]>n$ and $\sum_{k=1}^p \alpha_k^{(j)}\mathsf{M}_{lk}=\delta_{jl}$ for $1\leq l,j\leq p$, it yields
$$
\begin{array}{rl}\displaystyle
\sum_{n=0}^\infty\frac{1}{n!}\sum_{k=1}^p \alpha_k^{(j)}
\left(
 \begin{pmatrix}
\luoz^k\\ \luzo^k
\end{pmatrix}
\cdot
\begin{pmatrix}
\hat x \\ \hat y
\end{pmatrix}
\right) ^n %\\
&\displaystyle
=
\sum_{n=[j/2]}^m\frac{(\imath\tilde \kappa)^n}{n!}\left(K_j^n(\mathbf{x})+
\sum_{l=p+1}^{2n+1}K_l^n(\mathbf{x})
\sum_{k=1}^p \alpha_k^{(j)}\mathsf{M}_{lk}\right)\\&\displaystyle+
\sum_{n=m+1}^\infty\frac{(\imath\tilde \kappa)^n}{n!}
\left(K_j^n(\mathbf{x})+
\sum_{l=p+1}^{2n+1}K_l^n(\mathbf{x})
\sum_{k=1}^p \alpha_k^{(j)}\mathsf{M}_{lk}\right)\\
&\displaystyle
=
\sum_{n=[j/2]}^m\frac{(\imath\tilde \kappa)^n}{n!}K_j^n(\mathbf{x})+
\tilde \kappa^{m+1}R_j(\tilde \kappa,\mathbf{x})
\end{array}
$$
since $l/2\geq m+1\Rightarrow [K_l^n = 0$ for $n\leq m]$, and where the remainder function $R_j$ defined by 
$$
R_j(\kappa,\mathbf{x})
= \frac{1}{\tilde \kappa^{m+1}}
\sum_{n=m+1}^\infty\frac{(\imath\tilde \kappa)^n}{n!}
\left(K_j^n(\mathbf{x})+
\sum_{l=p+1}^{2n+1}K_l^n(\mathbf{x})
\sum_{k=1}^p \alpha_k^{(j)}\mathsf{M}_{lk}\right)
$$
is uniformly bounded on $\hat K$. So

$$
\sum_{k=1}^p \alpha_k^{(j)} \varphi_k =
\sum_{n=[j/2]}^m
\left(
\frac{(\imath\tilde \kappa)^n}{n!}K_j^n(\mathbf{x})
+\tilde \kappa^{n+1}g_{k,q}(\mathbf{x}) 
\right)
+\tilde \kappa^{m+1}R_j(\tilde \kappa,\mathbf{x})
$$
Since  $b_j(\mathbf{x}) =(i\tilde \kappa)^{-[j/2]}\sum_{k=1}^p \alpha_k^{(j)} \varphi_k  $ it clearly shows that
$$
\lim_{\tilde \kappa\rightarrow 0} b_j(\mathbf{x})
=
\frac{1}{{[j/2]}!}K_j^{[j/2]}(\mathbf{x}).
$$
As a consequence, the definition of $K_j^{[j/2]}$ combined with $((\luoz^k,\luzo^k) = \imath \tilde \kappa(\cos\theta_k,\sin\theta_k))$ lead to
$$
b_1(\mathbf{x}) = 1 + O(\tilde \kappa^2),\ 
b_2(\mathbf{x}) = x-x_K + O(\tilde \kappa^2),\
b_3(\mathbf{x}) = y-y_K + O(\tilde \kappa^2).
$$
\end{proof}

To complete the proof of Lemma \ref{linfun}, we need to prove the following result.
\begin{lemma}\label{lem:Pk}
 Suppose $n\in\mathbb N$ is such that $n\geq 2$. For $p=2n+1$ and $q\geq n+1$, consider the basis of $p$ functions $\varphi_k\in \GPWKhat$ approximating \eqref{helm} at the point  $\mathbf g_{\hat K}=(x_{\hat K},y_{\hat K})$ at order $q$, and, for all $k$ such that $1\leq k\leq p$, the corresponding polynomials $\displaystyle P_k(x,y)=\sum_{0\leq i+j\leq q+1} \luij^k (x-x_{\hat K})^i (y-y_{\hat K})^j$ satisfying $\varphi_k=\exp P_k$. These polynomials satisfy
$$
P_k(x,y)=
 \begin{pmatrix}
\luoz^k\\ \luzo^k
\end{pmatrix}
\cdot
\begin{pmatrix}
\hat x \\ \hat y
\end{pmatrix}
+\kappa^2 \epsilon(x_K,y_K) f_{k,q}({\mathbf{x}})
$$
where $\mathbf x = (x,y)$, $(\hat x,\hat y)=(x-x_{\hat K},y-y_{\hat K})$ and the remainder function $f_{k,q}$ is uniformly bounded on $\hat K$.
\end{lemma}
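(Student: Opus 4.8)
\section*{Proof proposal for Lemma \ref{lem:Pk}}

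The plan is to read the recursion \eqref{eq:IF} together with the normalization defining the GPWs, and to show that every coefficient $\luij^k$ of $P_k$ with first index $i\geq 2$ is divisible by $\kappa^2\epsilon(x_{\hat K},y_{\hat K})$, with a quotient that stays bounded as $\tilde\kappa:=\kappa\sqrt{\epsilon(x_{\hat K},y_{\hat K})}\to 0$. Indeed, by the normalization $\lambda_{0,0}^k=0$ and $\luij^k=0$ whenever $i\in\{0,1\}$ and $i+j\geq 2$, so $P_k(x,y)=\luoz^k\hat x+\luzo^k\hat y+\sum_{i\geq 2,\ i+j\leq q+1}\luij^k\hat x^i\hat y^j$, and the claimed splitting amounts exactly to the divisibility just stated, with $f_{k,q}(\mathbf x)=\sum_{i\geq 2,\ i+j\leq q+1}\bigl(\luij^k/(\kappa^2\epsilon(x_{\hat K},y_{\hat K}))\bigr)\hat x^i\hat y^j$. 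Since the equi-spacing of the $\theta_k$ plays no role here, I will only use that $(\luoz^k,\luzo^k)=\imath\tilde\kappa(\cos\theta_k,\sin\theta_k)$, hence $\luoz^k{}^2+\luzo^k{}^2=-\kappa^2\epsilon(x_{\hat K},y_{\hat K})$, and that $\kappa^2=\tilde\kappa^2/\epsilon(x_{\hat K},y_{\hat K})$.

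First I would observe that, holding the Taylor coefficients $\dux^a\duy^b\epsilon(x_{\hat K},y_{\hat K})$ fixed, each $\luij^k$ is a polynomial in $\tilde\kappa$ of bounded degree: this follows by induction on the first index, because in \eqref{eq:IF} the coefficient $\lambda_{i+2,j}$ is expressed through $\lambda_{i,j+2}$ and through bilinear products whose factors all carry first index $\leq i+1$, i.e.\ \emph{strictly smaller} first index, while the inhomogeneous term $-\kappa^2\dux^{i}\duy^{j}\epsilon/(i!j!)=-\tilde\kappa^2\,\dux^{i}\duy^{j}\epsilon/(\epsilon(x_{\hat K},y_{\hat K})\,i!j!)$ is a monomial of degree $2$ in $\tilde\kappa$. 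Running the same induction I would then track the order of vanishing at $\tilde\kappa=0$. The intermediate claim is that $\lambda_{a,b}^k=O(\tilde\kappa)$ for every $(a,b)$ with $a+b\geq 1$: for $a\leq 1$ this is the normalization (such a coefficient is either $0$ or one of $\imath\tilde\kappa\cos\theta_k$, $\imath\tilde\kappa\sin\theta_k$), and for $a\geq 2$ it follows from \eqref{eq:IF} and the inductive hypothesis. Bootstrapping, for $i\geq 2$ one gets $\luij^k=O(\tilde\kappa^2)$: in \eqref{eq:IF} (applied with $i-2$ in place of $i$) the inhomogeneous term is $O(\tilde\kappa^2)$; the term $\lambda_{i-2,j+2}$ is either $0$ (when $i\in\{2,3\}$, by the normalization, its total degree being $\geq 2$) or $O(\tilde\kappa^2)$ (when $i\geq 4$, by induction on the first index); and in each bilinear product both factors have first index $\geq 1$ and total degree $\geq 1$, hence each factor is $O(\tilde\kappa)$ by the intermediate claim and the product is $O(\tilde\kappa^2)$. (One can even check the cancellation $\lambda_{2,j}^k=0$ using $\luoz^k{}^2+\luzo^k{}^2=-\kappa^2\epsilon(x_{\hat K},y_{\hat K})$, so the remainder in fact starts at degree $3$ in $(\hat x,\hat y)$, but this precise cancellation is not needed.)

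To conclude: for $i\geq 2$ the polynomial $\luij^k$ in $\tilde\kappa$ has no constant or linear term, so $\luij^k/\tilde\kappa^2=\luij^k/(\kappa^2\epsilon(x_{\hat K},y_{\hat K}))$ is again a polynomial in $\tilde\kappa$, hence bounded on any bounded range of $\tilde\kappa$ and with a finite limit as $\tilde\kappa\to 0$. Thus $f_{k,q}$ is a fixed polynomial of degree $\leq q+1$ in $(\hat x,\hat y)$ with such bounded coefficients, and bounding it on the compact reference element $\hat K$ by the sum of the absolute values of its coefficients gives the asserted uniform bound.

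The only real obstacle is bookkeeping: organising the induction so that \eqref{eq:IF} is genuinely applied only to coefficients of strictly smaller first index, and verifying that in both bilinear sums every factor has total degree at least one (so that the normalization forces the low-first-index factors either to vanish or to be $O(\tilde\kappa)$). There is no analytic difficulty, and the uniformity in $\mathbf x\in\hat K$ is immediate once the coefficient estimates are in place.
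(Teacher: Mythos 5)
Your proposal is correct and follows essentially the same route as the paper's own proof: an induction on the recursion \eqref{eq:IF} under the stated normalization, showing each $\luij^k$ with $i\geq 2$ vanishes to order $\tilde\kappa^2$ (the paper merely writes out the reduced recursion and asserts ``induction,'' so your version supplies the bookkeeping it elides). The only blemish is the parenthetical claim that $\lu{2}{j}^k=0$ for all $j$ — the identity $(\luoz^k)^2+(\luzo^k)^2=-\kappa^2\epsilon(x_{\hat K},y_{\hat K})$ only forces $\lu{2}{0}^k=0$, while $\lu{2}{j}^k=-\kappa^2\partial_y^j\epsilon(x_{\hat K},y_{\hat K})/(2\,j!)$ is generally nonzero for $j\geq 1$ — but as you note this aside is not needed for the argument.
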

\begin{proof}
The normalization $(\luoz^k,\luzo^k) = \imath \kappa\sqrt{ \epsilon(x_K,y_K)}(\cos\theta_k,\sin\theta_k)$ implies that  $\lu{2}{0}^k=0$ so that the induction formula \eqref{eq:IF} reads:
\begin{equation}\left\{
\begin{array}{cl}
\lu{2}{j}^k &\displaystyle
 = -\frac{1}{2}\frac{\kappa^2\partial_y^j\epsilon(x_K,y_K)}{j!} \\
\lu{3}{j}^k &\displaystyle
 = -\frac{1}{6}\left( \frac{\kappa^2\partial_x\partial_y^j\epsilon(x_K,y_K)}{j!} + 4\lu{2}{j}^k\luoz^k\right)\\
\lu{i+2}{j}^k &\displaystyle
 = -\frac{1}{(i+2)(i+1)}\Bigg( \frac{\kappa^2\partial_x^i\partial_y^j\epsilon(x_K,y_K)}{i!j!} + (j+2)(j+1)\lu{i}{j+2}^k\luoz^k \qquad \forall i>1,\\
 &\displaystyle \phantom = 
 + \sum_{k=1}^{i-1}\sum_{l=0}^{j}
 (i-k+1)(k+1)\lu{i-k+1}{j-l}^k\lu{k+1}{l}^k\\
 &\displaystyle \phantom = 
 + \sum_{l=1}^{i-1}\sum_{k=0}^{j}
 (j-k+1)(k+1)\lu{i-l}{j-k+1}^k\lu{l}{k+1}^k\Bigg)
\end{array}\right.
\end{equation}
This clearly completes the proof by induction.
\end{proof}

More precisely, the result needed in the following result that  corresponds to Lemma 3.12 from \cite{git09}. We state it here to specify the GPWs parameters, and provide no more than a sketch of the proof since it relies on Lemma \ref{linfun} but not specifically on the basis function set.
\begin{corollary} \label{lincor} Suppose that $n\in\mathbb N$ is such that $n\geq 2$. For $p=2n+1$ and $q\geq n+1$, suppose $w_h^c$ is a linear function on a triangle $K$ then there is a GPW function $w_h\in \GPWK$ such that
\[
\Vert w_h^c-w_h\Vert_{L^2(K)}\leq Ch_K^2\Vert w_h^c\Vert_{L^2(K)}.
\]
\end{corollary}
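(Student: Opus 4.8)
The plan is to reduce the estimate to Lemma~\ref{linfun} by a dilation argument. Let $\mathbf g_K=(x_K,y_K)$ be the centroid of $K$ and let $\Phi_K:\tilde{\mathbf x}\mapsto \mathbf g_K+h_K\tilde{\mathbf x}$ be the dilation mapping a triangle $\tilde K$ of unit diameter onto $K$; by local quasi-uniformity $\tilde K$ has inradius and angles bounded uniformly in $K$, and its centroid is $\mathbf 0$. Under $\Phi_K$ a linear function on $K$ pulls back to a linear function on $\tilde K$, the $L^2$ norms satisfy $\|v\|_{L^2(K)}=h_K\|v\circ\Phi_K\|_{L^2(\tilde K)}$ in two dimensions, the Laplacian scales as $\Delta\mapsto h_K^{-2}\Delta$, and hence $[\Delta+\kappa^2\epsilon]u=0$ on $K$ becomes $[\Delta+(h_K\kappa)^2\,\epsilon\circ\Phi_K]\,\tilde u=0$ on $\tilde K$, a Helmholtz equation with effective wave number $h_K\kappa$. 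Since a dilation does not rotate, one checks that $\GPWK$ — built from the equi-spaced directions $\theta_l=2\pi(l-1)/p$, the normalization $(\luoz^k,\luzo^k)=\imath\kappa\sqrt{\epsilon(\mathbf g_K)}(\cos\theta_k,\sin\theta_k)$, and the recursion \eqref{eq:IF} — pulls back exactly onto the GPW space on $\tilde K$ associated with the same directions $\theta_l$, the normalization $\imath (h_K\kappa)\sqrt{(\epsilon\circ\Phi_K)(\mathbf 0)}(\cos\theta_k,\sin\theta_k)$, and the same order $q$; indeed, writing $P(x,y)=\sum \lambda_{i,j}(x-x_K)^i(y-y_K)^j$, the pulled-back coefficients are $\tilde\lambda_{i,j}=h_K^{i+j}\lambda_{i,j}$, and since $\partial_x^i\partial_y^j(\epsilon\circ\Phi_K)=h_K^{i+j}\,(\partial_x^i\partial_y^j\epsilon)\circ\Phi_K$, multiplying \eqref{eq:IF} by $h_K^{i+j+2}$ turns it into the same recursion for $(h_K\kappa,\epsilon\circ\Phi_K)$.

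Next I would apply Lemma~\ref{linfun} (more precisely, its proof, whose only use of the reference triangle is that it is a fixed triangle with the centroid in its interior — true for $\tilde K$ with constants uniform in $K$) on $\tilde K$ with wave number $h_K\kappa$: the hypotheses $n\ge 2$, $p=2n+1$, $q\ge n+1$ are unchanged, and for the pull-back $\tilde f=w_h^c\circ\Phi_K$ one obtains a GPW $\tilde v$ in the rescaled space on $\tilde K$ with
\[
\|\tilde f-\tilde v\|_{L^2(\tilde K)}\le C\,(h_K\kappa)^2\,|\epsilon(\mathbf g_K)|\,\|\tilde f\|_{L^2(\tilde K)},
\]
the constant $C$ being independent of the wave number $h_K\kappa$, hence of $h_K$, and uniform in $K$ (it depends on $p$ and the quasi-uniformity constant). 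Pushing $\tilde v$ forward by $\Phi_K$ gives $w_h\in\GPWK$; the factor $h_K$ from the change of variables cancels on the two sides of the displayed inequality, and absorbing $\kappa^2|\epsilon(\mathbf g_K)|$ into the constant yields $\|w_h^c-w_h\|_{L^2(K)}\le C h_K^2\|w_h^c\|_{L^2(K)}$, as claimed.

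The part needing the most care — and the reason only a sketch is warranted — is the claim that $\GPWK$ pulls back exactly onto the rescaled GPW space on $\tilde K$: one must verify the covariance of the recursion \eqref{eq:IF} under $\Phi_K$ (the computation sketched above, done term by term including the quadratic convolution sums) together with the covariance of the normalization, and note that the $O(|\mathbf x-\mathbf g_K|^q)$ residual of each GPW rescales to an $O(|\tilde{\mathbf x}|^q)$ residual on $\tilde K$. Once this identification is granted, all the approximation content is supplied by Lemma~\ref{linfun}, and the remaining ingredients — the two-dimensional $L^2$ scaling under $\Phi_K$ and the cancellation of $h_K$ — are routine, so I would present them briefly.
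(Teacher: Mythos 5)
Your proposal is correct and follows essentially the same route as the paper: translate and dilate $K$ to a unit-sized reference element, observe that this turns the GPW space into the GPW space for the rescaled frequency $\hat\kappa=h_K\kappa$, apply Lemma~\ref{linfun} to the pulled-back linear function to gain the factor $\hat\kappa^2=h_K^2\kappa^2$, and map back (the $L^2$ Jacobian factors cancelling). The paper states this only as a sketch and simply asserts the frequency rescaling; your verification of the covariance of the recursion \eqref{eq:IF} and of the normalization under dilation is exactly the detail being elided there.
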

\begin{proof}
 By translation and dilation by $1/h_K$ we can map an element $K$ to an element $\tilde{K}\subset \hat{K}= (0,1)^2$. Let $\hat{w}^c$ denote the transformed polynomial and note that $\hat{w}_h^c\in \mathcal P_1(\mathbb R^2)$. Let $\hat P$ the $L^2(\hat K)$-projection onto the plane wave space $GPW_{\hat{\kappa}}^{p,q}(\hat{K})$ where $\hat{\kappa} = h_K \kappa$. Applying  Lemma \ref{linfun}, we get
{%\color{red}
\begin{equation}\label{eq:est}
\| (I-\hat P )\hat{w}_h^c\|_{0,\hat K} 
\leq C\hat{\kappa}^2|\epsilon(x_{\hat K},y_{\hat K})| \| \hat{w}_h^c \|_{0,\hat K}.
\end{equation}}
The conclusion then follows by transforming back to $K$ using the fact that the transformation from a triangle $K$ to the reference triangle $\tilde K$ changes the frequency into $\hat \kappa = h_K \kappa$  {%\color{red} 
and the bound  $\|            \hat{w}_h^c \|_{0,\hat K}\leq C \|            \hat{w}_h^c \|_{0,\tilde K} $ with $C$ independent of $\tilde{K}$ which holds
because $ \hat{w}_h^c$ is a linear polynomial and the mesh is regular.}
\end{proof}

\section{Error Estimates}\label{EE}
In this section we start by establishing  a straightforward error estimate using the coercivity and boundedness of the sesquilinear form $B_h(\cdot,\cdot)$, and
then prove convergence in the global $L^2$ norm.

We define the obvious modification of the DG norm from \cite{hmp13} by
\begin{eqnarray*}
\Vert u\Vert_{DG}^2&=&
\frac{1}{\kappa}\Vert \beta^{1/2}\jmp{\nabla_h u}\Vert_{L^2(\cEI)}^2+\kappa\Vert \alpha^{1/2}\jmp{ u}\Vert_{L^2(\cEI)}^2+\frac{1}{\kappa}\Vert \delta^{1/2}\partial_nu\Vert_{L^2(\cER)}^2\\&&\quad+\kappa\Vert (1-\delta)^{1/2}u\Vert_{L^2(\cER)}^2+\kappa\Vert \alpha^{1/2}u\Vert_{L^2(\cED)}^2+\frac{1}{\kappa^2}\Vert \gamma^{1/2}(\Delta_h u+\kappa^2\epsilon u)\Vert_{L^2(\Omega)}^2
\end{eqnarray*}
where $\Delta_h$ is the Laplacian defined piecewise element by element.  Obviously this is a semi-norm, but due to the new term is also a norm even on functions that do not
exactly satisfy the Helmholtz equation.
\begin{lemma}
The semi-norm $\Vert \cdot\Vert_{DG}$ is a norm.
\end{lemma}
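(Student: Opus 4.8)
The plan is to show that $\Vert u\Vert_{DG}=0$ implies $u=0$ for $u\in V_h$ (or, more generally, for $u$ piecewise $H^2$), since all the individual terms are manifestly non-negative and the triangle inequality and homogeneity for a semi-norm are immediate. So suppose $\Vert u\Vert_{DG}=0$. Each of the six summands vanishes: the jumps $\jmp{\nabla_h u}$ and $\jmp{u}$ vanish on all interior edges $\cEI$, the Dirichlet trace $u$ vanishes on $\cED$, the Robin-type quantities $\partial_n u$ and $(1-\delta)^{1/2}u$ vanish on $\cER$, and crucially the new volume term forces $\Delta_h u+\kappa^2\epsilon u=0$ on every element $K$ (using $\gamma>0$ piecewise constant and $\epsilon$ bounded).

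From $\jmp{u}=0$ and $\jmp{\nabla_h u}=0$ on all interior edges, together with $u$ piecewise $H^2$, I would conclude that $u\in H^1(\Omega)$ with $\Delta u\in L^2(\Omega)$ globally (the jumps of the function and of the normal derivative across interior edges vanish). Then the elementwise identity $\Delta_h u+\kappa^2\epsilon u=0$ upgrades to $\Delta u+\kappa^2\epsilon u=0$ in $\Omega$. Next I would read off the boundary conditions: $u=0$ on $\Gamma_D$ from the $\cED$ term, and on $\Gamma_R$ the vanishing of $\partial_n u$ and of $(1-\delta)^{1/2}u$ gives $\partial_n u+\eye\kappa u=0$ on $\Gamma_R$ provided $\delta\neq 1$ there (one needs $1-\delta>0$ pointwise on $\cER$, which is part of the standing assumptions on the penalty parameters — I would state this explicitly). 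Thus $u$ solves the homogeneous boundary value problem \eqref{helm}--\eqref{imp} with $g=0$.

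The final step is a uniqueness argument for the homogeneous problem. Testing $\Delta u+\kappa^2\epsilon u=0$ against $\overline{u}$ and integrating by parts over $\Omega$ gives $\int_\Omega(|\nabla u|^2-\kappa^2\epsilon|u|^2)\,dA+\int_{\Gamma_R}\partial_n u\,\overline u\,ds=0$ (the $\Gamma_D$ boundary term drops since $u=0$ there), and substituting $\partial_n u=-\eye\kappa u$ on $\Gamma_R$ and taking the imaginary part yields $\kappa\int_{\Gamma_R}|u|^2\,ds=0$, hence $u=0$ on $\Gamma_R$ and $\partial_n u=0$ on $\Gamma_R$. Then $u$ has zero Cauchy data on $\Gamma_R$ and solves the elliptic equation $\Delta u+\kappa^2\epsilon u=0$; by a unique continuation argument (Holmgren if $\epsilon$ is analytic, or the general unique continuation results for second-order elliptic operators with bounded coefficients) $u\equiv 0$ in $\Omega$. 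I expect the main obstacle to be this last uniqueness/unique-continuation step: one must be a little careful because $\epsilon$ is only piecewise smooth, not globally analytic, so Holmgren does not apply directly across inter-element interfaces; the cleanest route is to invoke the well-posedness of \eqref{helm}--\eqref{imp} established in \cite{hmp13} (the same regularity result cited in Section~2 presupposes uniqueness), which immediately gives $u=0$ from $g=0$ and sidesteps the unique-continuation subtlety entirely. Alternatively one can propagate zero Cauchy data inward element by element using unique continuation within each smooth piece and the matching of traces across interfaces.
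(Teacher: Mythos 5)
Your proof is correct and follows essentially the same route as the paper's (which is much terser): deduce the elementwise equation and vanishing jumps from the vanishing DG norm, glue to get $\Delta u+\kappa^2\epsilon u=0$ globally, observe that the Cauchy data on the boundary vanishes, and conclude $u=0$. Your extra care about the final uniqueness/unique-continuation step (and the observation that one can bypass it by citing well-posedness from \cite{hmp13}) fills in exactly what the paper leaves implicit; note also that since both $u$ and $\partial_n u$ vanish directly on $\Gamma_R$ from the norm, your intermediate impedance-condition/imaginary-part argument is not actually needed.
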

\begin{proof} Suppose $\Vert u\Vert_{DG}=0$ then
$u$ satisfies $\Delta u+\kappa^2\epsilon u=0$ element-wise, and the normal derivatives and function values have no jump across interior edges. So
$\Delta u+\kappa^2\epsilon u=0$ in $\Omega$.  In addition the Cauchy data vanishes and so $u=0$ in $\Omega$.  Hence $\Vert \cdot\Vert_{DG}$ is a norm.
\end{proof}

We also need a new DG+ norm:
\begin{eqnarray*}
\Vert u\Vert_{DG+}^2&=&\Vert u\Vert_{DG}^2+\kappa\Vert \beta^{-1/2}\avg{u}\Vert_{L^2(\cEI)}^2+\frac{1}{\kappa}\Vert \alpha^{-1/2}\avg{\nabla_h u}\Vert_{L^2(\cEI)}^2+
\kappa\Vert\delta^{-1/2} u\Vert^2_{L^2(\cER)}\\&&\quad+\frac{1}{\kappa}\Vert \alpha^{-1}\partial_n u\Vert^2_{L^2(\cED)}+\kappa^2\Vert\gamma^{-1/2} u\Vert_{L^2(\Omega)}.
\end{eqnarray*}
The following estimates hold:
\begin{lemma} Under the  assumption that $\alpha>0$, $\beta>0$, $1>\delta>0$ and $\gamma>0$ in the generalized TDG, and provided $u$ is such that $\Vert u\Vert_{DG}$ is finite,
\[
\Im(B_h(u,u))\geq \Vert u\Vert_{DG}^2.
\]
Provided  $\Vert u\Vert_{DG+}$ and $\Vert v\Vert_{DG}$ are finite,  there exists a constant $C$ independent of $\kappa$, $u$ and $v$ such that
\[
|B_h(u,v)|\leq C \Vert u\Vert_{DG+}\Vert v\Vert_{DG}.
\]
\end{lemma}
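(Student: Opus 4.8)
The plan is to treat the two estimates separately, in each case reducing to the corresponding statement for the standard TDG form $A_h$ from \cite{hmp13} and then accounting for the extra volume term carried by $B_h$.

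For the coercivity bound, I would first compute $\Im(B_h(u,u))$ directly. The new term contributes
\[
\Im\left(\frac{i}{\kappa^2}\int_\Omega\gamma\,|\Delta_h u+\kappa^2\epsilon u|^2\,dA\right)=\frac{1}{\kappa^2}\int_\Omega\gamma\,|\Delta_h u+\kappa^2\epsilon u|^2\,dA,
\]
which is exactly the last term in $\Vert u\Vert_{DG}^2$. For the remaining part $\Im(A_h(u,u))$, I would reproduce the standard TDG argument: expand $A_h(u,u)$, use that $\epsilon$ and the volume integrand $\nabla_h u\cdot\nabla_h\overline u-\kappa^2\epsilon|u|^2$ are real so they drop out of the imaginary part, and collect the skeleton terms. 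Integrating by parts is \emph{not} needed here since we only want the lower bound; the penalty terms with coefficients $\beta$, $\alpha$, $\delta$, $(1-\delta)$ directly yield the remaining five terms of $\Vert u\Vert_{DG}^2$, while the flux-consistency terms (the $\avg{\nabla_h u}\cdot\jmp{\overline u}$ type integrals and the $\delta\,\partial_n u\,\overline v$ boundary terms) are purely real in the diagonal case $u=v$ and hence contribute nothing to the imaginary part. This gives $\Im(B_h(u,u))\geq\Vert u\Vert_{DG}^2$ with equality, provided $\alpha,\beta,\gamma>0$ and $0<\delta<1$ so that each term is nonnegative.

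For the continuity bound I would write $B_h(u,v)=A_h(u,v)+T(u,v)$ with $T$ the new volume term, and bound each piece by Cauchy--Schwarz edge by edge (resp. elementwise). The standard TDG estimate $|A_h(u,v)|\leq C\Vert u\Vert_{DG+}\Vert v\Vert_{DG}$ from \cite{hmp13} carries over verbatim, since $A_h$ has not changed; one simply matches each factor in $A_h$ against a $DG$-norm piece of $v$ and a $DG+$-norm piece of $u$ using the weights $\alpha,\beta,\delta$ and their reciprocals (e.g. $\int_{\cEI}\avg{\nabla_h u}\cdot\jmp{\overline v}$ is controlled by $\Vert\alpha^{-1/2}\avg{\nabla_h u}\Vert_{L^2(\cEI)}\Vert\alpha^{1/2}\jmp v\Vert_{L^2(\cEI)}$, etc.). For the new term,
\[
|T(u,v)|\leq\frac{1}{\kappa^2}\int_\Omega\gamma\,|\Delta_h u+\kappa^2\epsilon u|\,|\Delta_h v+\kappa^2\epsilon v|\,dA\leq\frac{1}{\kappa^2}\Vert\gamma^{1/2}(\Delta_h u+\kappa^2\epsilon u)\Vert_{L^2(\Omega)}\Vert\gamma^{1/2}(\Delta_h v+\kappa^2\epsilon v)\Vert_{L^2(\Omega)},
\]
and the first factor is $\leq\Vert u\Vert_{DG+}$ (it is one of the $DG$-norm summands, hence bounded by the $DG+$-norm) while the second is exactly the corresponding $DG$-norm summand of $v$, hence $\leq\Vert v\Vert_{DG}$. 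Adding the two contributions gives the claim with a constant $C$ independent of $\kappa$.

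The main obstacle is really bookkeeping rather than any genuine difficulty: one must be careful that the new terms $\kappa\Vert\beta^{-1/2}\avg u\Vert^2$, $\kappa\Vert\delta^{-1/2}u\Vert^2_{L^2(\cER)}$ and $\frac{1}{\kappa}\Vert\alpha^{-1}\partial_n u\Vert^2_{L^2(\cED)}$ in the $DG+$ norm are precisely what is needed to absorb the flux-consistency terms of $A_h$ (those involving $\avg{\nabla_h u}\cdot\jmp{\overline v}$, $\delta u\,\partial_n\overline v$, and $\partial_n u\,\overline v$ on $\cED$) that are \emph{not} penalty terms and so cannot be paired against a $DG$-norm factor of $u$; here one pairs them against a $DG$-norm factor of $v$ and a $DG+$-norm factor of $u$. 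Keeping the $\kappa$-powers balanced across these pairings is the only place where care is required, and it follows the template of \cite[proof of the coercivity/continuity lemma]{hmp13}. Since $\epsilon$ is bounded above and below, its variability affects only the (unspecified) $\kappa$-dependence already flagged in the text, not the structure of the estimate.
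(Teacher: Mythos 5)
Your treatment of the coercivity bound is correct and is essentially what the paper does: the skeleton flux-consistency terms and the volume term $\int_\Omega(|\nabla_h u|^2-\kappa^2\epsilon|u|^2)\,dA$ are real (using that $\epsilon$ is real), the penalty terms contribute exactly the squared $DG$-norm pieces, and the new $\gamma$-term contributes $\frac{1}{\kappa^2}\Vert\gamma^{1/2}(\Delta_h u+\kappa^2\epsilon u)\Vert_{L^2(\Omega)}^2$, so $\Im(B_h(u,u))=\Vert u\Vert_{DG}^2$. Your Cauchy--Schwarz bound on the new volume term $T(u,v)$ is also fine.

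The gap is in the continuity of $A_h$ itself. You assert that the estimate $|A_h(u,v)|\leq C\Vert u\Vert_{DG+}\Vert v\Vert_{DG}$ from \cite{hmp13} ``carries over verbatim, since $A_h$ has not changed.'' What has changed is the class of functions: in \cite{hmp13} both arguments lie in a Trefftz space, so after integrating the grad-grad term by parts the residual volume integral $-\int_\Omega u\,\overline{(\Delta_h v+\kappa^2\epsilon v)}\,dA$ vanishes identically and only skeleton terms remain. Here $u$ and $v$ are GPWs (or differences involving the exact solution), which do \emph{not} satisfy the Helmholtz equation element-wise, so the volume contribution of $A_h$ survives and must be bounded. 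Your ``edge by edge'' Cauchy--Schwarz matching never addresses the term $\int_\Omega(\nabla_h u\cdot\nabla_h\overline v-\kappa^2\epsilon u\,\overline v)\,dA$, and it cannot be bounded in the form you keep it, because neither $\Vert\cdot\Vert_{DG}$ nor $\Vert\cdot\Vert_{DG+}$ contains a volume $L^2$ norm of the gradient. The paper's proof handles exactly this point by first integrating the grad-grad term by parts to arrive at the representation \eqref{UWVF}, whose only volume term is $-\int_\Omega u\,\overline{(\Delta_h v+\kappa^2\epsilon v)}\,dA$; this is then estimated by
\[
\Bigl(\kappa\,\Vert\gamma^{-1/2}u\Vert_{L^2(\Omega)}\Bigr)\cdot\Bigl(\tfrac{1}{\kappa}\Vert\gamma^{1/2}(\Delta_h v+\kappa^2\epsilon v)\Vert_{L^2(\Omega)}\Bigr)\leq\Vert u\Vert_{DG+}\Vert v\Vert_{DG},
\]
which is precisely why the new $\gamma$-weighted terms were added to the $DG$ and $DG+$ norms in the first place. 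Without that integration by parts (or an equivalent device) the continuity claim for $A_h$ on the GPW space is unproven, so this step needs to be added to your argument.
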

\begin{proof}
The coercivity estimate follows from in the usual way by considering $\Im (B_h(u,u))$ and using the  assumption that $\epsilon$ is real \cite{HMP11}.

To obtain the desired continuity, we integrate the term $\nabla_h u\cdot\nabla_h v$ term in the definition of $A_h(\cdot,\cdot)$ 
by parts to obtain, for any $u,v\in V_h$,
\begin{eqnarray}
A_h(u,v)&=&-\int_\Omega u(\overline{\Delta_h v+\kappa^2\epsilon v})\,dA+\int_{\cEI}
\avg{u}\jmp{\nabla_h \overline{v}}\,ds-\int_{\cEI}\avg{\nabla_h u}\jmp{\overline{v}}\,ds\nonumber
\\&&
+\int_{\cER}(1-\delta) u{\partial_n \overline{v}}\,ds-\frac{1}{\eye\kappa}\int_{\cEI}\beta\jmp{\nabla_h u}\jmp{\nabla_h \overline{v}}\,ds\nonumber\\
&&+\eye\kappa\int_{\cEI}\alpha\jmp{u}\jmp{\overline{v}}\,ds-\frac{1}{\eye\kappa}\int_{\cER}\delta{\partial_n u}{\partial_n \overline{v}} \,ds\nonumber\\
&&+\eye\kappa\int_{\cER}(1-\delta)u\overline{v}\,ds-\int_{\cER}\delta {\partial_n u}\,\overline{v}\,ds\label{UWVF}\\
&&-\int_{\cED}{\partial_n u}\overline{v}+i\kappa\int_{\cED}\alpha u\overline{v}.
\nonumber
\end{eqnarray}
The result now follows from the definition of $B_h(u,v)$ and the Cauchy-Schwarz inequality.
\end{proof}
The following result is now a standard consequence of the above estimates~\cite{MelenkEsterhazy12,hmp13}:
\begin{lemma}
There is a unique solution $u_h\in V_h$ that satisfies (\ref{Bhelm}), and 
the following error estimate holds with constant $C$ independent of $\kappa$, $u$, and $u_h$:
\begin{equation}
\Vert u-u_h\Vert_{DG}\leq C \Vert u-v\Vert_{DG+}\quad \mbox{ for all }v\in V_h.
\label{ceatype}
\end{equation}
\end{lemma}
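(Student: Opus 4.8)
The plan is to establish the three claims --- existence, uniqueness, and the quasi-optimality estimate \eqref{ceatype} --- by the standard Lax--Milgram-type argument in the setting of a coercive-plus-bounded sesquilinear form, exactly as in \cite{MelenkEsterhazy12,hmp13}, but now using the two estimates of the previous lemma. First I would note that $V_h$ is finite dimensional, so existence and uniqueness are equivalent and it suffices to prove uniqueness: if $B_h(u_h,v)=0$ for all $v\in V_h$ then taking $v=u_h$ and using the coercivity estimate $\Im(B_h(u_h,u_h))\geq \Vert u_h\Vert_{DG}^2$ gives $\Vert u_h\Vert_{DG}=0$, hence $u_h=0$ since $\Vert\cdot\Vert_{DG}$ is a norm on $V_h$. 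Note here one uses that every $v_h\in V_h$ has $\Vert v_h\Vert_{DG}$ finite, which holds because each GPW is smooth (indeed $e^P$ with $P$ polynomial) on its element, so the trace and volume terms defining $\Vert\cdot\Vert_{DG}$ are all finite; this is what makes the coercivity estimate applicable to members of $V_h$.

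For the error estimate, let $u$ be the exact solution of \eqref{helm}--\eqref{imp} and let $v\in V_h$ be arbitrary. Since $u\in H^{3/2+s}(\Omega)$ is sufficiently regular, $A_h(u,w)=F(w)$ and, because $u$ solves the PDE exactly, the extra stabilizing term in $B_h$ vanishes when its first argument is $u$, so $B_h(u,w)=A_h(u,w)=F(w)$ for all admissible $w$; combined with \eqref{Bhelm} this gives Galerkin orthogonality $B_h(u-u_h,w)=0$ for all $w\in V_h$. Then write $u-u_h=(u-v)+(v-u_h)$ with $v-u_h\in V_h$, apply coercivity to the second piece,
\[
\Vert v-u_h\Vert_{DG}^2\leq \Im\big(B_h(v-u_h,v-u_h)\big)=\Im\big(B_h(v-u,v-u_h)\big),
\]
where the last equality uses Galerkin orthogonality, and then bound the right-hand side by the continuity estimate $|B_h(v-u,v-u_h)|\leq C\Vert v-u\Vert_{DG+}\Vert v-u_h\Vert_{DG}$. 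Dividing by $\Vert v-u_h\Vert_{DG}$ yields $\Vert v-u_h\Vert_{DG}\leq C\Vert u-v\Vert_{DG+}$, and a triangle inequality together with the trivial bound $\Vert u-v\Vert_{DG}\leq\Vert u-v\Vert_{DG+}$ gives \eqref{ceatype}. The constant is independent of $\kappa$ precisely because both the coercivity and continuity constants were, by the previous lemma, independent of $\kappa$.

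The only delicate point --- and the one I would treat most carefully --- is checking that the continuity estimate may legitimately be applied with first argument $v-u$, since that estimate was stated for arguments with finite $\Vert\cdot\Vert_{DG+}$ norm, and $\Vert u\Vert_{DG+}$ involves $\kappa\Vert\beta^{-1/2}\avg{u}\Vert_{L^2(\cEI)}$, $\kappa^{-1}\Vert\alpha^{-1/2}\avg{\nabla_h u}\Vert_{L^2(\cEI)}$ and $\kappa^{-1}\Vert\alpha^{-1}\partial_n u\Vert_{L^2(\cED)}$. These require traces of $u$ and $\nabla u$ on the mesh skeleton, which is where the regularity $u\in H^{3/2+s}(\Omega)$ from \cite[Theorem 2.3]{hmp13} enters: it guarantees $u$ and $\nabla u$ have well-defined $L^2$ traces on $\cE_h$, so $\Vert u\Vert_{DG+}<\infty$, and likewise $\Vert u-v\Vert_{DG+}<\infty$ for $v\in V_h$. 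I would simply invoke this regularity result (already quoted in the excerpt) rather than reprove it. Everything else is the routine Céa-lemma bookkeeping, so no serious obstacle remains.
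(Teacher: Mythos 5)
Your argument is correct and is precisely the ``standard consequence of the above estimates'' that the paper invokes without writing out: uniqueness (hence existence, by finite dimensionality) from coercivity, Galerkin orthogonality using the fact that the stabilization term vanishes on the exact solution, and the usual coercivity--continuity--triangle-inequality chain, with the regularity $u\in H^{3/2+s}(\Omega)$ guaranteeing that $\Vert u-v\Vert_{DG+}$ is finite. No discrepancy with the paper's (uncited, standard) route.
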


To obtain an order estimate, we need to make specific choices of the parameters
$\alpha$, $\beta$, $\delta$ and $\gamma$. There are several choices in the literature
depending on the precise setting of the problem (see for example \cite{buf07,git09,hmp13,hmp15}).  In this paper we shall make the classical UWVF choice \cite{buf07}:
\begin{equation}
\alpha=\beta=\gamma=\delta=1/2,\label{p_uwvf}
\end{equation} 
so that we can use results from~\cite{kapita14}.  Then we choose for $\gamma$
\[
\gamma=\gamma_0 h_K^r
\]
where $\gamma_0$ is constant and $ r\geq 0$.  We shall examine the role of $r$ later.

Using the estimates from Section~\ref{GPW} we can then prove the following error estimate
\begin{theorem}\label{th:u-uh}  Suppose $n\in\mathbb N$ is such that $n\geq 2$ and consider $p=2n+1$ and $q\geq n+1$. Suppose $V_h$ is formed from $q$th order GPWs element by element using $p$ directions. Then
the solution $u_h\in V_h$ of (\ref{Bhelm}) exists for all $h>0$ independent of $\kappa$ and it satisfies the following estimate with constant $C$ independent of $\kappa$, $u$, and $u_h$:
\[
\Vert u-u_h\Vert_{DG}\leq C (h^{n-1/2}+h^{q+r/2}).
\]
Here $C$ depends on the $\Vert u\Vert_{{\cal C}^{\max(n+1,q)}(\Omega)}$ norm of $u$.
\end{theorem}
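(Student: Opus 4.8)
The plan is to derive the estimate from the quasi-optimality bound \eqref{ceatype} together with the GPW interpolation results of Section~\ref{GPW}. Existence and uniqueness of $u_h$ for every $h>0$, with no constraint relating $h$ to $\kappa$, is already furnished by the lemma containing \eqref{ceatype} (it rests only on the coercivity and continuity of $B_h$ on a finite dimensional space). It therefore remains to bound $\inf_{v\in V_h}\|u-v\|_{DG+}$; for this I take $v=v_h\in V_h$ to be the piecewise interpolant produced element by element by Theorem~\ref{th:u-ua}, so that $\|u-u_h\|_{DG}\le C\|u-v_h\|_{DG+}$ and the task reduces to estimating each of the finitely many terms in the definition of $\|u-v_h\|_{DG+}^2$.

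The key simplification is that the exact solution satisfies \eqref{helm} pointwise and is globally continuous, so $\jmp{u}=0$ and $\jmp{\nabla_h u}=0$ across every interior edge and $\Delta u+\kappa^2\epsilon u=0$ throughout $\Omega$; hence each jump or residual of $u-v_h$ reduces (up to sign) to the corresponding quantity for $v_h$ alone, and on a boundary edge the trace of $u-v_h$ is controlled directly by the elementwise interpolation error. On an edge $e\subset\partial K$ I would pass from the pointwise bounds \eqref{eq:gradumua}, in which $|\mathbf m-\mathbf g_K|\le h_K$, to $L^2(e)$ bounds by inserting $|e|^{1/2}\le C h_K^{1/2}$, obtaining $\|u-v_h\|_{L^2(e)}\le Ch_K^{n+3/2}\|u\|_{\mathcal C^{n+1}(K)}$ and $\|\nabla_h(u-v_h)\|_{L^2(e)}\le Ch_K^{n+1/2}\|u\|_{\mathcal C^{n+1}(K)}$. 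Summing over the $O(h^{-2})$ edges and elements (using the mesh regularity hypotheses), the gradient jump term $\kappa^{-1}\|\beta^{1/2}\jmp{\nabla_h(u-v_h)}\|_{L^2(\cEI)}^2$ and the gradient average term $\kappa^{-1}\|\alpha^{-1/2}\avg{\nabla_h(u-v_h)}\|_{L^2(\cEI)}^2$ each contribute $O(h^{2n-1})$, i.e. $O(h^{n-1/2})$ to $\|u-v_h\|_{DG+}$, while the function jump, function average, impedance and Dirichlet trace terms contribute the higher order $O(h^{n+1/2})$ or better.

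The new bulk term $\kappa^{-2}\|\gamma^{1/2}(\Delta_h(u-v_h)+\kappa^2\epsilon(u-v_h))\|_{L^2(\Omega)}^2$ is treated with Lemma~\ref{helm-est}: since this residual equals $-(\Delta_h v_h+\kappa^2\epsilon v_h)$, its unweighted $L^2(\Omega)$ norm is $O(h^q)$, and inserting $\gamma^{1/2}=\gamma_0^{1/2}h_K^{r/2}\le Ch^{r/2}$ produces the contribution $O(h^{q+r/2})$. For the remaining $\gamma^{-1/2}$-weighted term of $\|\cdot\|_{DG+}$ I would use the pointwise bound $|u-v_h|\le Ch_K^{n+1}\|u\|_{\mathcal C^{n+1}(K)}$ with $|K|\le Ch_K^2$ and sum over elements. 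Collecting all contributions and absorbing the unspecified $\kappa$-dependence and the dependence on $\|u\|_{\mathcal C^{\max(n+1,q)}(\Omega)}$ into $C$ yields $\|u-v_h\|_{DG+}\le C(h^{n-1/2}+h^{q+r/2})$, hence the stated bound. The main obstacle is not any isolated inequality but the bookkeeping: one must track simultaneously the interpolation orders ($h_K^{n+1}$ for $u$, $h_K^{n}$ for $\nabla u$, $h_K^{q}$ for the Helmholtz residual), the trace and element-measure factors, and the $h^{-1}$ lost when summing $O(h^{-2})$ edge terms in $\ell^2$, and then check that after multiplication by the $\kappa^{\pm1}$ and $\gamma^{\pm1/2}$ weights every term is dominated by $h^{n-1/2}+h^{q+r/2}$. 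The one term deserving genuine care is the $\gamma^{-1/2}$-weighted $L^2(\Omega)$ term, since it is there that the exponent $r$ interacts with the interpolation rate, and this is the place where the role of $r$ (promised to be examined later) must be pinned down.
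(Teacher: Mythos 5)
Your proposal is correct and follows essentially the same route as the paper: quasi-optimality via \eqref{ceatype}, the elementwise GPW interpolant of Theorems~\ref{th:u-ua}--\ref{th:u-ua2} as the comparison function, Lemma~\ref{helm-est} with the $\gamma^{1/2}=O(h^{r/2})$ weight for the volume residual term giving $O(h^{q+r/2})$, and edge-by-edge estimates giving $O(h^{n-1/2})$ from the gradient terms. The only cosmetic difference is that you pass from the pointwise interpolation bounds to $L^2(e)$ directly via $|e|^{1/2}$, whereas the paper routes through the trace inequality $\Vert w\Vert^2_{L^2(\partial K)}\leq C\bigl(h_K^{-1}\Vert w\Vert^2_{L^2(K)}+h_K\Vert\nabla w\Vert^2_{L^2(K)}\bigr)$ combined with the $k=2$ case of Theorem~\ref{th:u-ua2}; both yield the same orders, and your explicit attention to the $\gamma^{-1/2}$-weighted term (which the paper subsumes under ``the remaining terms'') is well placed.
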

\begin{remark} Since we need $q\geq n+1$ in the GPW theory, we see that the choice $q=n+1$ guarantees that the approximation of
the Helmholtz equation is high enough order. \end{remark}

\begin{proof} We pick $v\in V_h$ in equation (\ref{ceatype}) element by element to be the approximation by GPWs denoted by $u_a$ in Theorem \ref{th:u-ua2}.  We now need to estimate each term in
$\Vert u-v\Vert_{DG+}$.
Using Lemma~\ref{helm-est} the new term
\begin{equation}\label{Luest}
\Vert \gamma^{1/2}(\Delta_h (u-v)+\kappa^2\epsilon (u-v)\Vert_{L^2(\Omega)}
=
\Vert \gamma^{1/2}(\Delta_h v+\kappa^2\epsilon v)\Vert_{L^2(\Omega)}
\leq Ch^{q+r/2} \Vert u\Vert_{{\cal C}^q}.
\end{equation}
The remaining terms can be estimated in using Theorem \ref{th:u-ua2}.  For example
\begin{eqnarray*}
\Vert \alpha^{-1/2}\avg{\nabla_h (u-v)}\Vert^2_{L^2(\cEI)}&\leq &C
\sum_K\Vert \alpha^{-1/2}\nabla_h (u-v)\Vert_{L^2(\partial K)}^2\\
&\leq&C\sum_K \max_{e\in\partial K}\alpha^{-1}(e)\Vert \nabla_h (u-v)\Vert_{L^2(\partial K)}^2\\
&\leq&C\sum_K \max_{e\in\partial K}\alpha^{-1}(e)\left[h_K^{-1}\Vert \nabla_h (u-v)\Vert_{L^2(K)}^2+h_K\Vert\nabla\nabla (u-v)\Vert_{L^2(K)}^2\right]
\end{eqnarray*}
where we have used the standard trace estimate on $\partial K$.  Using Theorem
\ref{th:u-ua2} with $k=2$ and Theorem \ref{th:u-ua} we obtain
\begin{eqnarray*}
\Vert \alpha^{-1/2}\avg{\nabla_h (u-v)}\Vert^2_{L^2(\cEI)}&\leq &C\sum_K \max_{e\in\partial K}\alpha^{-1}(e)h^{2n-1}_K h_K^2\Vert u\Vert^2_{{\cal C}^{n+1}(K)}\\&\leq& 
C\max_{e\in \cEI}\alpha^{-1}(e)h^{2n-1}\Vert u\Vert_{{\cal C}^{n+1}(\Omega)}^2.
\end{eqnarray*}
Of course under our assumptions $\alpha=1/2$.
The remaining terms are estimated in the same way.
\end{proof}
We now use the standard duality approach to prove an $L^2(\Omega)$ norm estimate on the error. 
\begin{theorem}\label{th:L2cv}
Suppose we choose $r=3$ in the penalty parameter $\gamma$,  $p=2n+1$, $n\geq 2$ and $q=n+1$.  Then 
there exists a constant $C$ depending on $\kappa$ but independent of $h$ such that
\[
\Vert u-u_h\Vert_{L^2(\Omega)}\leq C h^{s} \Vert u-u_h\Vert_{DG}
\]
for some $s$ with  $0<s<1/2$ depending on $\Omega$ (given in \cite[Theorem 2.3]{hmp13}).
\end{theorem}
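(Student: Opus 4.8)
The plan is a duality (Aubin--Nitsche) argument in the spirit of \cite{hmp13,kapita14}, the two features special to the present setting being the stabilization term in $B_h$ and the fact that the dual solution is too rough to be interpolated directly by GPWs. Write $e=u-u_h$ and let $z$ solve the adjoint problem
\[
\Delta z+\kappa^2\epsilon z=-e\ \text{in}\ \Omega,\qquad z=0\ \text{on}\ \Gamma_D,\qquad \partial_n z-\eye\kappa z=0\ \text{on}\ \Gamma_R.
\]
Taking complex conjugates, $\overline z$ solves a problem of the form \eqref{helm}--\eqref{imp} with vanishing boundary data and the $L^2(\Omega)$ volume source $-\overline e$, so by \cite[Theorem 2.3]{hmp13} (applied to the adjoint) we have $z\in H^{3/2+s}(\Omega)$ with $\|z\|_{H^{3/2+s}(\Omega)}\le C(\kappa)\|e\|_{L^2(\Omega)}$ for the exponent $s$ in the statement. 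In particular $z\in H^1(\Omega)$ with $\Delta z\in L^2(\Omega)$, so $\jmp{z}=0$ and $\jmp{\nabla_h z}=0$ on every interior edge, and the Dirichlet and impedance conditions hold exactly on $\Gamma_D$, $\Gamma_R$.

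First I would record the duality identity. Substituting $u=e$, $v=z$ into the integrated-by-parts form \eqref{UWVF} of $A_h$ and using the conformity of $z$ together with the adjoint boundary conditions, every edge and boundary integral cancels, leaving $A_h(e,z)=-\int_\Omega e\,\overline{(\Delta z+\kappa^2\epsilon z)}\,dA=\|e\|_{L^2(\Omega)}^2$. Adding the stabilization term and using $\Delta z+\kappa^2\epsilon z=-e$ once more,
\[
\|e\|_{L^2(\Omega)}^2=B_h(e,z)+\frac{\eye}{\kappa^2}\int_\Omega\gamma\,(\Delta_h e+\kappa^2\epsilon e)\,\overline e\,dA,
\]
and the last term is bounded, via Cauchy--Schwarz and $\gamma=\gamma_0 h_K^3$, by $C\kappa^{-1}h^{3/2}\|e\|_{DG}\|e\|_{L^2(\Omega)}$, which is harmless since $s<1/2$. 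Because $u$ solves \eqref{helm}--\eqref{imp} exactly, $B_h(u,v)=F(v)$ for all piecewise-$H^2$ test functions, hence for all $v\in V_h$ (GPWs are smooth); together with $B_h(u_h,v)=F(v)$ for $v\in V_h$ this gives the Galerkin orthogonality $B_h(e,v_h)=0$ for $v_h\in V_h$, so $B_h(e,z)=B_h(e,z-v_h)$ for any $v_h\in V_h$.

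The crux is choosing $v_h\in V_h$ approximating $z$. Since $z$ is only $H^{3/2+s}$ it cannot be reached by the GPW interpolant of Theorem~\ref{th:u-ua} (which requires a $\mathcal C^{n+1}$ solution of the homogeneous equation), so I proceed in two steps, exactly as \cite{git09,kapita14} do for plane waves: first let $z_1$ be a conforming piecewise linear (Scott--Zhang) quasi-interpolant of $z$, so that $\|z-z_1\|_{L^2(\Omega)}\le Ch^{3/2+s}\|z\|_{H^{3/2+s}}$, $|z-z_1|_{H^1(\Omega)}\le Ch^{1/2+s}\|z\|_{H^{3/2+s}}$, together with the associated element-boundary trace bounds; then, on each triangle $K$, since $z_1|_K\in\mathcal P_1(\mathbb R^2)$, Corollary~\ref{lincor} (rescaled with $\hat\kappa=h_K\kappa$) supplies $v_h|_K\in\GPWK$ with $\|z_1-v_h\|_{L^2(K)}\le Ch_K^2\|z_1\|_{L^2(K)}$. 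The construction behind Lemma~\ref{linfun} in fact produces approximants $b_j$ of $1,\,x-x_K,\,y-y_K$ with the $O(\hat\kappa^2)$ correction controlled in $\mathcal C^2(\hat K)$, which after rescaling yields the companion estimates $\|\nabla(z_1-v_h)\|_{L^2(K)}\le Ch_K\|z_1\|_{L^2(K)}$, $\|D^2(z_1-v_h)\|_{L^2(K)}\le C\|z_1\|_{L^2(K)}$, and, using $|[\Delta+\kappa^2\epsilon]\varphi_l|\le C_l|\mathbf m-\mathbf g_K|^q$, a bound $\|\Delta_h v_h+\kappa^2\epsilon v_h\|_{L^2(K)}\le C\|z_1\|_{L^2(K)}$. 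In particular $\|z-v_h\|_{L^2(\Omega)}\le Ch^{3/2+s}\|z\|_{H^{3/2+s}}$.

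It remains to estimate $\|z-v_h\|_{DG+}$ term by term. Using that $z$ and $z_1$ are conforming, so that $\jmp{z-v_h}=\jmp{z_1-v_h}$ and $\jmp{\nabla_h(z-v_h)}=-\jmp{\nabla_h z_1}+\jmp{\nabla_h(z_1-v_h)}$, together with the trace/inverse bounds above, the approximation properties of $z_1$, and $\gamma=\gamma_0 h_K^3$, each term is $O(h^{2s})$ or smaller in the squared norm. The two terms that saturate at $O(h^{2s})$ are the new stabilization term, for which $\kappa^2\|\gamma^{-1/2}(z-v_h)\|_{L^2(\Omega)}^2=\kappa^2\gamma_0^{-1}\sum_K h_K^{-3}\|z-v_h\|_{L^2(K)}^2\le C(\kappa)h^{2s}\|z\|_{H^{3/2+s}}^2$ --- precisely where $r=3$ is used, balancing $h_K^{-3}$ against $h_K^{3+2s}$ --- and the jump-of-gradient term $\sum_e\|\jmp{\nabla_h z_1}\|_{L^2(e)}^2\le Ch^{2s}\|z\|_{H^{3/2+s}}^2$ coming from the corner-limited regularity of $\Omega$. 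Hence $\|z-v_h\|_{DG+}\le C(\kappa)h^s\|e\|_{L^2(\Omega)}$, and combining the duality identity, the Galerkin orthogonality, the boundedness $|B_h(e,z-v_h)|\le C\|e\|_{DG}\|z-v_h\|_{DG+}$ (the previous lemma's continuity estimate with the two arguments interchanged, valid by the same Cauchy--Schwarz argument on the adjoint form), and the $O(h^{3/2})$ bound on the stabilization cross term, we obtain $\|e\|_{L^2(\Omega)}^2\le C(\kappa)h^s\|e\|_{DG}\|e\|_{L^2(\Omega)}$; dividing by $\|e\|_{L^2(\Omega)}$ finishes the proof. The main obstacle is the low regularity of $z$: it is what forces the two-step approximation through a piecewise linear function and Corollary~\ref{lincor}, and it is what makes the $\|z-v_h\|_{DG+}$ bookkeeping delicate --- in particular the new stabilization contribution is controlled only because $r$ is taken as large as $3$, which is in turn why the exponent $h^{q+r/2}$ appearing in Theorem~\ref{th:u-uh} had to be re-examined with this value of $r$.
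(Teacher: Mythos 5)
Your proposal is correct and follows essentially the same route as the paper's proof: a duality argument with the adjoint solution $z\in H^{3/2+s}(\Omega)$, separate treatment of the stabilization cross term $\frac{1}{\kappa^2}\int_\Omega\gamma(\Delta_h e+\kappa^2\epsilon e)\overline e\,dA$, and a two-step approximation of $z$ by a conforming piecewise linear interpolant followed by Corollary~\ref{lincor}, with the choice $r=3$ entering exactly where you place it, in balancing $\kappa^2\Vert\gamma^{-1/2}(z-v_h)\Vert_{L^2(\Omega)}^2$ against the $h_K^{3+2s}$ interpolation rate. The only differences (sign convention in the dual problem, Scott--Zhang versus nodal interpolation, bounding the cross term by Cauchy--Schwarz directly rather than absorbing via the arithmetic--geometric mean inequality) are cosmetic.
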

Under best possible conditions we then have the following convergence estimate:
\begin{corollary}\label{uuhcor} Suppose $u$ is a smooth solution of the Helmholtz equation, that $r=3$, $p=2n+1$, $n\geq 2$ and $q= n+1$ then
\[
\Vert u-u_h\Vert_{L^2(\Omega)}\leq C h^{n+s-1/2}
\]
\end{corollary}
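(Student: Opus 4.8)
The plan is to combine the two preceding results in the obvious way. Corollary \ref{uuhcor} is an immediate consequence of Theorem \ref{th:u-uh} and Theorem \ref{th:L2cv}, so the proof is essentially a two-line concatenation, and the only task is to bookkeep the exponents.

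First I would invoke Theorem \ref{th:L2cv}: with the parameter choices $r=3$, $p=2n+1$, $n\geq 2$ and $q=n+1$ we have
\[
\Vert u-u_h\Vert_{L^2(\Omega)}\leq C h^{s}\Vert u-u_h\Vert_{DG}
\]
for some $s\in(0,1/2)$ depending only on $\Omega$. Then I would bound the $\Vert u-u_h\Vert_{DG}$ factor using Theorem \ref{th:u-uh}, which under these same parameter choices gives
\[
\Vert u-u_h\Vert_{DG}\leq C\left(h^{n-1/2}+h^{q+r/2}\right)=C\left(h^{n-1/2}+h^{n+1+3/2}\right)=C\left(h^{n-1/2}+h^{n+5/2}\right).
\]
Since $n-1/2<n+5/2$ and $h$ should be thought of as small (or in any case the estimate is stated up to a constant on a bounded mesh family), the first term dominates, so $\Vert u-u_h\Vert_{DG}\leq C h^{n-1/2}$. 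Substituting back yields $\Vert u-u_h\Vert_{L^2(\Omega)}\leq C h^{s}\cdot h^{n-1/2}=C h^{n+s-1/2}$, which is the claimed bound. The constant $C$ here absorbs the $\kappa$-dependence from Theorem \ref{th:L2cv} and the dependence on $\Vert u\Vert_{{\cal C}^{\max(n+1,q)}(\Omega)}=\Vert u\Vert_{{\cal C}^{n+1}(\Omega)}$ from Theorem \ref{th:u-uh}, which is finite because $u$ is assumed smooth.

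There is no real obstacle: the work has all been done in the two cited theorems, and the only thing to check is that the choice $r=3$ makes the "pollution"-type term $h^{q+r/2}$ decay at least as fast as the approximation term $h^{n-1/2}$ — indeed $q+r/2=n+1+3/2=n+5/2\geq n-1/2$ — so that $r=3$ is admissible (in fact more than enough; $r\geq 0$ would already suffice to keep $h^{q+r/2}\le h^{q}=h^{n+1}\le h^{n-1/2}$, but $r=3$ is the value fixed in Theorem \ref{th:L2cv} for the duality argument). Hence the proof is just: apply Theorem \ref{th:L2cv}, then Theorem \ref{th:u-uh}, then simplify exponents.
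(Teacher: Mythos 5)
Your proposal is correct and is exactly how the corollary follows in the paper: the paper states it as an immediate consequence of Theorem \ref{th:u-uh} and Theorem \ref{th:L2cv} without writing out a separate proof, and your exponent bookkeeping ($q+r/2=n+5/2\geq n-1/2$, so the $h^{n-1/2}$ term dominates, then multiply by $h^s$) matches the intended derivation.
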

\begin{remark}  Since $s\leq 1/2$ the maximum rate of convergence predicted for the method assuming a smooth solution and best regularity is
$O(h^n)$.
\end{remark}
\begin{proof}
 Define the dual variable $z\in H^1(\Omega)$ to satisfy
\begin{eqnarray*}
\Delta z+\kappa^2\epsilon z&=& u-u_h\mbox{ in }\Omega,\\
{\partial_n z}-i\kappa z&=&0\mbox{ on }\Gamma_R,\\
 z&=&0\mbox{ on }\Gamma_D.
\end{eqnarray*}
Under the assumptions on the domain, it is easy to see that  $z\in H^{3/2+s}(\Omega)$, $s>1/2$, \cite{hmp13} is sufficiently regular  that
\[
A_h(\xi,z)=\int_\Omega \xi\overline{(u-u_h)}\,dA
\]
for all test function $\xi$ that are $H^2$ piecewise smooth. This follows from  (\ref{UWVF}).  Hence, by the definition of $B_h$ 
\[
B_h(\xi,z)=\frac{1}{\kappa^2}\int_\Omega \gamma (\Delta_h\xi+\kappa^2\epsilon \xi)\overline{(u-u_h)}\,dA+\int_\Omega \xi\overline{(u-u_h)}\,dA
\]
so choosing $\xi=u-u_h$ and letting $z_h\in V_h$ be arbitrary
\[
\Vert u-u_h\Vert_{L^2(\Omega)}^2=B(u-u_h,z-z_h)-\frac{1}{\kappa^2}\int_\Omega \gamma (\Delta_h(u-u_h)+\kappa^2\epsilon (u-u_h))\overline{(u-u_h)}\,dA.
\]
The second term on the right hand side can be estimated using the Cauchy-Schwarz and arithmetic-geometric mean  inequality to give 
\begin{eqnarray*}
\left|\int_\Omega \gamma (\Delta_h(u-u_h)+\kappa^2\epsilon (u-u_h))\overline{(u-u_h)}\,dA\right|
&\leq &\frac{1}{k}\Vert u-u_h\Vert_{DG}\Vert \gamma^{1/2} (u-u_h)\Vert_{L^2(\Omega)}\\&\leq& \frac{\gamma_{max}}{2\kappa^2}\Vert u-u_h\Vert_{DG}^2+\frac{1}{2}\Vert u-u_h\Vert_{L^2(\Omega)}^2,
\end{eqnarray*}
where $\gamma_{max}=\max_{x\in\Omega}\gamma=O(h^r)$.

To estimate $B_h(u-u_h,z-z_h)$ we integrate the grad-grad term in $A_h(u,v)$ by parts onto $u$ to obtain
\begin{eqnarray}
A_h(u,v)&=& -\int_{\Omega}(\Delta_h u+\kappa^2\epsilon u)\overline v\,dA+\int_{\cEI}\left(\jmp{\nabla_h u}\avg{\overline{v}}
-\jmp{ u}\cdot\avg{\nabla_h\overline{v}}\right)\,ds-\frac{1}{i\kappa}\int_{\cEI}\beta\jmp{\nabla_h u}\jmp{\nabla_h\overline{v}}\,ds\nonumber\\&&
+{i\kappa}\int_{\cEI}\alpha\jmp{ u}\cdot\jmp{\overline{v}}\,ds-\int_{\cER}\frac{\delta}{i\kappa} (i\kappa u-{\partial_n u}){\partial_n \overline{v}}\,ds\nonumber\\&&
+\int_{\cER}(1-\delta)({\partial_n u}-i\kappa u)\overline{v}\,ds+\int_{\cED}u(i\kappa\alpha \overline{v}-\partial_n\overline{v})\,ds.
\label{Ahap}\end{eqnarray}
Using this in the definition of $B_h(u,v)$ shows that
\[
|B_h(u,v)|\leq C\Vert u\Vert_{DG}\Vert v\Vert_{DG+} 
\]
where $C$ is independent of  $u$ and $v$ so that we have the estimate
\begin{equation}
\Vert u-u_h\Vert_{L^2(\Omega)}^2\leq C \Vert u-u_h\Vert_{DG}\Vert z-z_h\Vert_{DG+} +\frac{\gamma_{max}}{2\kappa^2}\Vert u-u_h\Vert_{DG}^2.
\label{eqest}
\end{equation}
It is now necessary to choose $z_h$.  Following the proof of \cite[Theorem 5.6]{kapita14}, let $z_h^c$ denote the continuous piecewise linear finite element interpolant of $z$. We choose $z_h\in V_h$ to be the GPW approximation of $z_h^c$  constructed in
Lemma~\ref{lincor}).   Of course
\[
\Vert z-z_h\Vert_{DG+}\leq \Vert z-z^c_h\Vert_{DG+}+\Vert z_h^c-z_h\Vert_{DG+}
\]
and it remains to estimate each term. Estimates from the proof of \cite[Theorem 5.6]{kapita14} show that on each interior edge in the mesh
\begin{eqnarray*}
\Vert\alpha^{-1/2}\avg{\nabla_h(z-z^c_{h})}\Vert_{L^2(e)}
&\leq& C\sum_{j=1}^2h_{K_j}^{s}\vert z\vert_{H^{3/2+s}(K_j)}\\
\Vert\beta^{-1/2}\avg{z-z^c_{h}}\Vert_{L^2(e)}&\leq&C\sum_{j=1}^2h^{1+s}_{K_j}\vert
z\vert_{H^{3/2+s}(K_j)},
\end{eqnarray*}
with corresponding entries results for the jumps in the above quantities and for boundary terms.  In addition
\begin{eqnarray*}
\Vert \gamma^{1/2}(\Delta_h (z-z_h^c)+\kappa^2\epsilon (z-z_h^c)\Vert_{L^2(\Omega)}
&=&
\Vert \gamma^{1/2}(u-u_h-(\Delta_h z_h^c+\kappa^2\epsilon z_h^c))\Vert_{L^2(\Omega)}\\
&\leq& Ch^{r/2}\Vert u-u_h\Vert_{L^2(\Omega)}+ \Vert \gamma^{1/2}(\Delta_h z_h^c+\kappa^2\epsilon z_h^c)\Vert_{L^2(\Omega)}.
\end{eqnarray*}
On an element $K$ we can use the regularity of the mesh to establish local inverse estimates and prove:
\begin{eqnarray*}
&& \Vert \gamma^{1/2}\Delta_h z_h^c\Vert_{L^2(K)}\leq Ch_K^{r/2-1}\Vert z_h^c\Vert_{H^1(K)}\\
& \leq &Ch_K^{r/2-1}(\Vert z_h^c-z\Vert_{H^1(K)}+\Vert z\Vert_{H^1(K)})\leq Ch_K^{r/2-1}\Vert z\Vert_{H^{3/2+s}(K)}.
 \end{eqnarray*}
Proceeding similarly for the lower order term, we conclude that provided $r/2>1$ we have 
\[
\Vert \gamma^{1/2}(\Delta_h (z-z_h^c)+\kappa^2\epsilon (z-z_h^c))\Vert_{L^2(\Omega)}\leq Ch^{r/2-1}\Vert z\Vert_{H^{3/2+s}(\Omega)}.
\]
In addition we must estimate
\begin{eqnarray*}
\Vert \gamma^{-1/2}(z-z_h^c)\Vert^2_{L^2(\Omega)}&=&\sum_K\int_K\gamma^{-1}(z-z_h^c)^2\,dA\leq C\sum_Kh_K^{3+2s-r}\Vert z\Vert_{H^{3/2+s}(K)}^2\\
&\leq& Ch^{3+2s-r}\Vert z\Vert_{H^{3/2+s}(\Omega)}^2.
\end{eqnarray*}
Taken together, if $3+2s\geq r\geq 2$ we have
\[
 \Vert z-z^c_h\Vert_{DG+}\leq C(h^{r/2-1}+h^{3/2+s-r/2})\Vert z\Vert_{H^{3/2+s}(\Omega)}
 \]
 A good choice is then $r=3$ since in that case $r/2-1\geq s$ and using the a priori estimate for $z$ from \cite[Theorem 2.3]{hmp13}
 \[
 \Vert z-z^c_h\Vert_{DG+}\leq Ch^s\Vert u-u_h\Vert_{L^2(\Omega)}.
 \]

It now remains to estimate $\Vert z_h^c-z_h\Vert_{DG+}$. As we have seen there are two troublesome terms:
$\Vert \gamma^{-1/2}(z_h^c-z_h)\Vert^2_{L^2(\Omega)}$ and $\Vert \gamma^{1/2}(\Delta_h (z^c_h-z_h)+\kappa^2\epsilon (z_h^c-z_h))\Vert_{L^2(\Omega)}$
with the remaining terms following using Lemma \ref{linfun} as in \cite{kapita14}.  Using first a local inverse estimate, then Lemma \ref{linfun},
\[
\Vert \gamma^{1/2}(\Delta_h (z^c_h-z_h)+\kappa^2\epsilon (z_h^c-z_h))\Vert_{L^2(K)}\leq Ch_K^{r/2-2}\Vert z_h^c-z_h\Vert_{L^2(K)}
\leq Ch_K^{r/2}\Vert z_h^c\Vert_{L^2(K)}
\]
so that, squaring and adding, and using the a priori estimate for $z$ from \cite[Theorem 2.3]{hmp13}
\begin{eqnarray*}
\Vert \gamma^{1/2}(\Delta_h (z^c_h-z_h)+\kappa^2\epsilon (z_h^c-z_h))\Vert_{L^2(\Omega)}&\leq &Ch_K^{r/2}\Vert z_h^c\Vert_{L^2(\Omega)}\\
&\leq& Ch_K^{r/2}(\Vert z-z_h^c\Vert_{L^2(K)}+\Vert z\Vert_{L^2(K)})\\&\leq& Ch^{r/2}\Vert u-u_h\Vert_{L^2(\Omega)}.
\end{eqnarray*}
To estimate the global $L^2$ term, again using Lemma \ref{linfun}, 
\[
\Vert \gamma^{-1/2}(z_h^c-z_h)\Vert^2_{L^2(K)}\leq Ch_K^{-r/2} \Vert (z_h^c-z_h)\Vert^2_{L^2(K)}
\leq Ch_K^{2-r/2}  \Vert z_h^c\Vert^2_{L^2(K)}
\]
Adding over all elements and using the  a priori estimate for $z$ from \cite[Theorem 2.3]{hmp13}
\[
\Vert \gamma^{-1/2}(z_h^c-z_h)\Vert^2_{L^2(\Omega)}\leq  Ch^{2-r/2}  \Vert u-u_h\Vert^2_{L^2(K)}
\]

We have thus proved that when $r=3$ and noting $0<s<1/2$ we have 
\[
\Vert z-z_h\Vert\leq Ch^{s}\Vert u-u_h\Vert_{L^2(\Omega)}.
\]
so we conclude the desired result using this result in (\ref{eqest}).\end{proof}
\section{Numerical Tests}\label{NT}
We now test the GPW based RDG method on two test problems with a known solution: Airy waves (linear variation in $\epsilon$) and Weber Waves (quadratic variation in $\epsilon$).  {In Figs.~\ref{ghqs}, \ref{ghcomb}, \ref{Wcomb} we plot the relative $L^2(\Omega)$ error in the computed solution against a parameter labeled $C/h$.  This is computed using the total number of degrees of freedom $N_{\rm{}dof}$ and the number of directions per element $p=2n+1$  as $\sqrt{N_{\rm{}dof}/p}$.  We choose this parameter since, in our theorems, convergence is in terms of mesh size rather than total number of degrees of freedom.}

\subsection{Airy Waves.}\label{Airy}
The simplest example of a spatially dependent refractive index is $\epsilon(x,y)=-y$ on the domain $[-1,1]\times[-1,1]$.  We can then choose Dirichlet boundary data (for our theory we need an impedance boundary condition, but the same result holds in the case provided $\kappa$ is not an
eigenvalue of the domain) such that the exact solution is
\[
u(x,y)=Ai(\kappa^{2/3}y)
\]
where $Ai(r)$ is the Airy function as shown in the left panel of Fig.~\ref{FL1}.
\begin{figure}
\begin{center}
\begin{tabular}{cc}
\resizebox{0.45\textwidth}{!}{\includegraphics{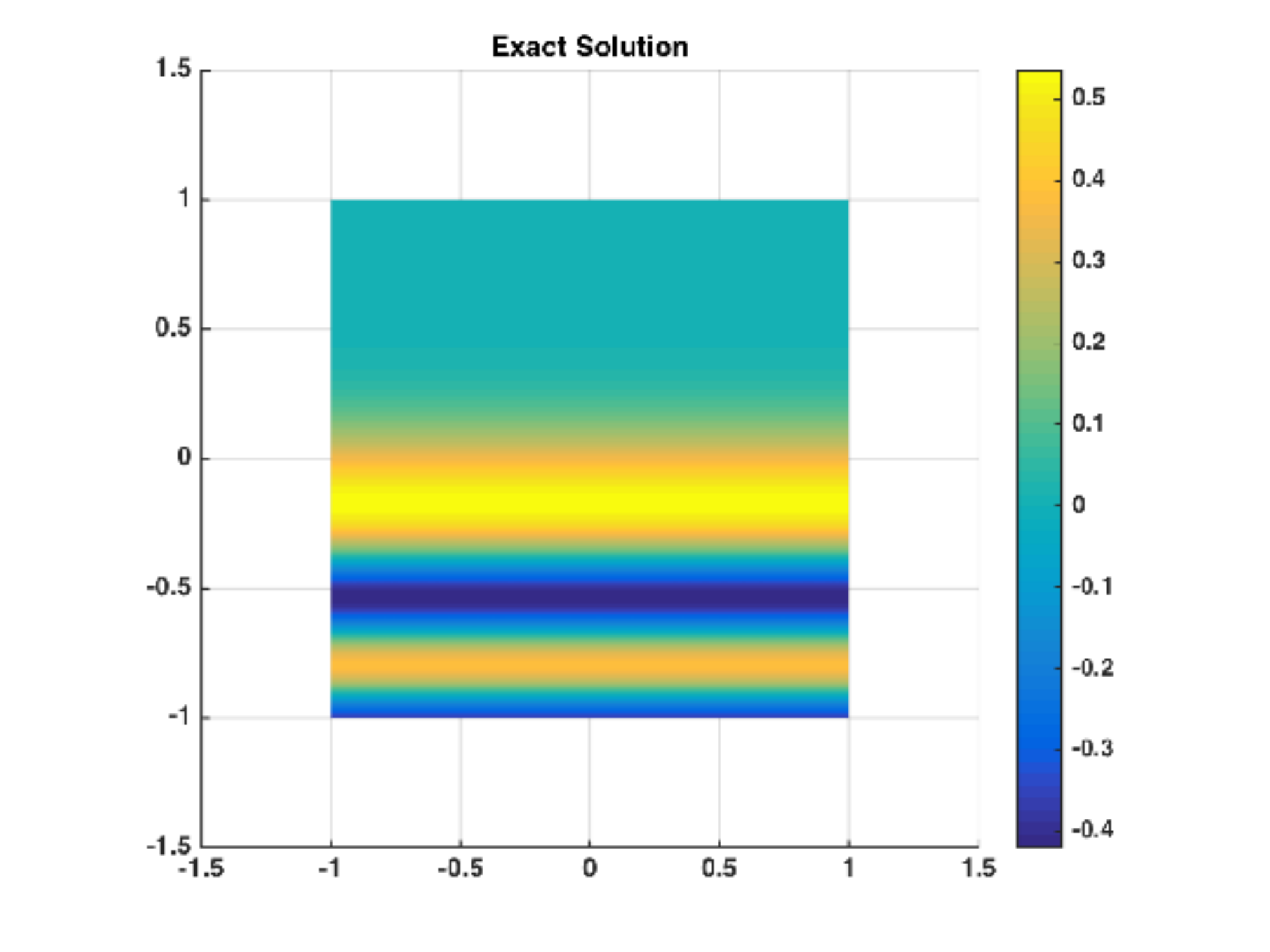}}
\resizebox{0.35\textwidth}{!}{\includegraphics{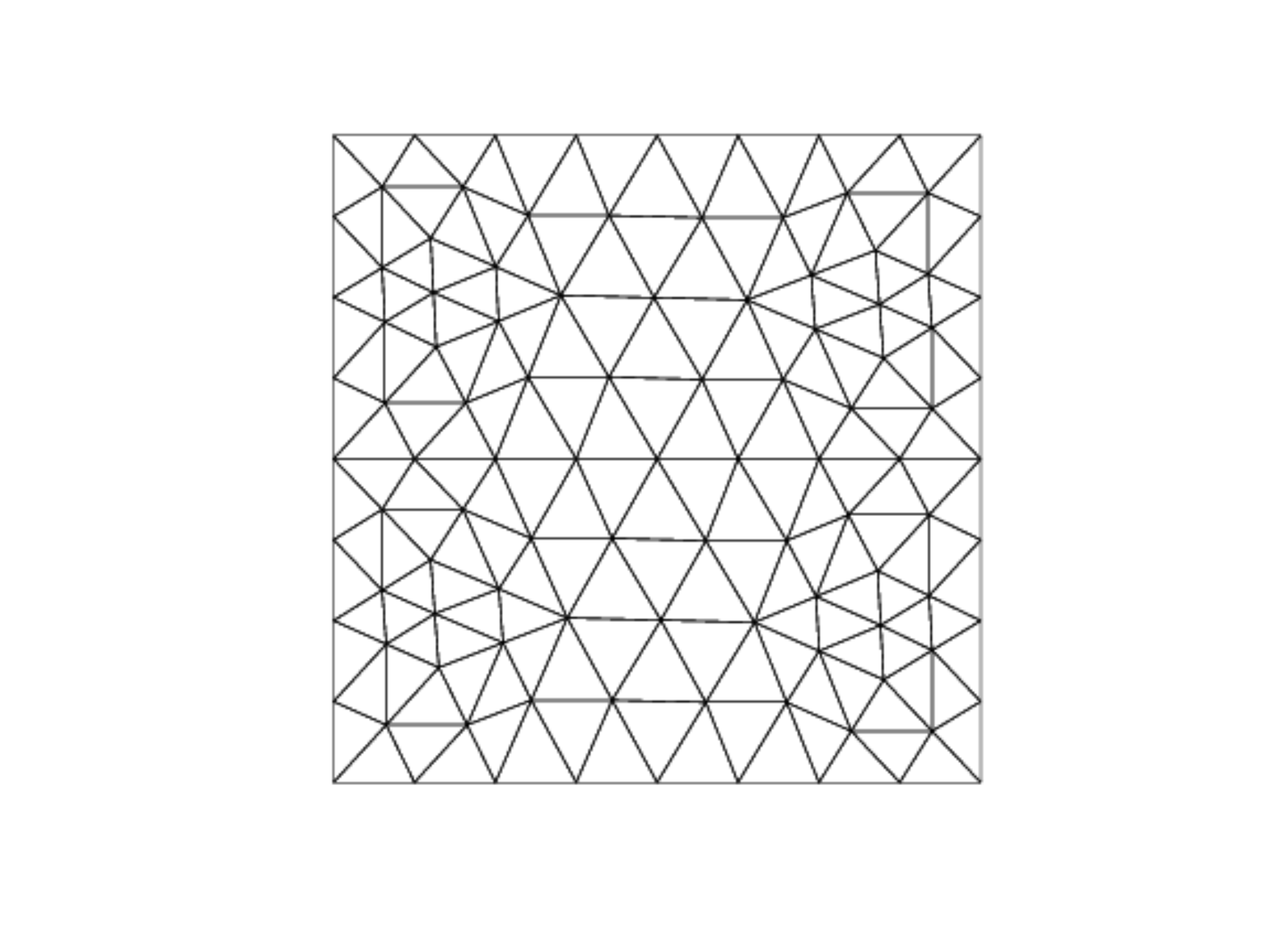}}
\end{tabular}
\end{center}
\caption{Left: Exact Airy function solution.  Right: Initial mesh.}
\label{FL1}
\end{figure}

%\begin{figure}
%\begin{center}
%\begin{tabular}{cc}
%\resizebox{0.45\textwidth}{!}{\includegraphics{F0}}&
%\resizebox{0.45\textwidth}{!}{\includegraphics{F0b}}
%\end{tabular}
%\end{center}
%\caption{$L^2$ norm convergence $\gamma=10^{3}$ (left panel) and $\gamma=1$ (right panel).  The dotted line in each panel  is a reference line showing $O(h^4)$ convergence.}
%\label{FLg0}
%\end{figure}

This solution is oscillatory for $y<0$ and exponentially decaying for $y>0$. In all the experiments, we make the choice
\[
\alpha=\beta=\delta=1/2,
\]
The initial mesh for the experiments is shown Fig.~\ref{FL1} right panel. 

\subsubsection{The case $\gamma=h^3$}
{%\color{red}
Starting with the mesh in Fig.~\ref{FL1} and using uniform refinement, we have computed the error in approximating the Airy function solution
when $\gamma=h^3$ and $\kappa=15$.  The order of approximation of the Helmholtz equation $q$ is set to 1, 3,  4, and 5 and the corresponding results are respectively marked with diamonds, circles, crosses and squares.

Our goal is to demonstrate that an appropriate choice of $n$ and $q$ can produce high order 
convergence.  Indeed our theory predicts that we should choose $q=n+1$, $n\geq 2$ and expect $O(h^n)$ convergence in the $L^2(\Omega)$ norm since the Airy function solution is smooth and the domain is convex (see Corollary~\ref{uuhcor}).  Results are shown in 
Fig.~\ref{ghqs} and Fig.~\ref{ghcomb}.  

{Fig.~\ref{ghqs} (left panel) demonstrates the need for GPWs in order to obtain high order convergence.  When $q=1$, the GPWs are plane waves and we see no obvious convergence when $n=1$ (three plane waves per element), but at most third order convergence for $n>1$.  This suggests that one approach using an $h$-refinement strategy is to use simple plane waves with $n=3$ to obtain third order convergence under mesh refinement (it appears that $n=3$ offers a useful improvement in accuracy over $n=2$ even if the order of convergence is the same).  To obtain fourth or higher order convergence we need true GPWs with $q>1$. } 

The case $n=1$ is also interesting.  Regardless of $q$ we do not see obvious convergence when $n=1$, whereas for a constant medium the plane wave basis with $n=1$ gives $O(h^2)$ convergence \cite{cessenat_phd}.  The variable refractive index seems to require $n>1$. %\textcolor{red}
{This is not unreasonable since when $n=1$ the plane waves do not approximate
linear polynomials well, and hence may not converge for a solution corresponding to smoothly varying coefficients.
When $n>1$, piecewise linears are well approximated by plane waves and so we expect (and see) convergence in this case ~\cite{git09,kapita14}.}

{For $n=2$ regardless of the choice $q=1,\cdots,5$ we see $O(h^{3})$ convergence, and for $n=3$ we get $O(h^4)$ convergence provided $q>1$, while if $q=1$ we get $O(h^2)$ convergence. Finally for $n=4$ we only have $O(h^{3})$ convergence when $q=1$, but $O(h^5)$ convergence for $q>1$ (with some deterioration on the finest mesh
when $q=3$ or $q=5$).  This deterioration may be due to the usual conditioning problem experience by plane wave type methods since when $n=4$ the condition number of the system is roughly $10^{20}$ which may impact convergence. The last cases $n=3,4$ confirms that $q$ must increase as $n$ increases although apparently more slowly than we predict.  In addition, with an adequate choice of $q$ we appear to see $O(h^{n+1})$ convergence for $n>1$.  This is the same order as has been found experimentally using $2n+1$ plane waves when $\epsilon$ is constant!~\cite{cessenat_phd}.  An optimal error analysis in that case is also illusive~\cite{buf07}.}

To try to clarify the best relationship between $q$ and $n$ we focus on the cases $q=n-1$, $q=n$ and $q=n+1$ in the left panel of
Fig.~\ref{ghcomb}.  Again the case $n=1$ fails to converge regardless of $q$, but otherwise the most reliable convergence is seen when $n=q$.  %\marginpar{please explain how you compute $C/h$ in the caption.}

%\begin{figure}
%\begin{center}
%\begin{tabular}{cc}
%\resizebox{\textwidth}{!}{\includegraphics{gh3ns}}
%\end{tabular}
%\end{center}
%\caption{$L^2$ norm convergence $\gamma=h^3$ using $n=3$ (left panel) and $n=4$ (right panel) for various choices of $q$. The dotted lines are reference lines showing $O(h^{5/2})$, $O(h^{3})$, $O(h^{4})$, and $O(h^5)$ convergence. In the left panel, the yellow, purple and green are so close that they can't be seen.}
%\label{gh3ns}
%\end{figure}

\begin{figure}
\begin{center}
\begin{tabular}{ccc}
\resizebox{.3\textwidth}{!}{\includegraphics{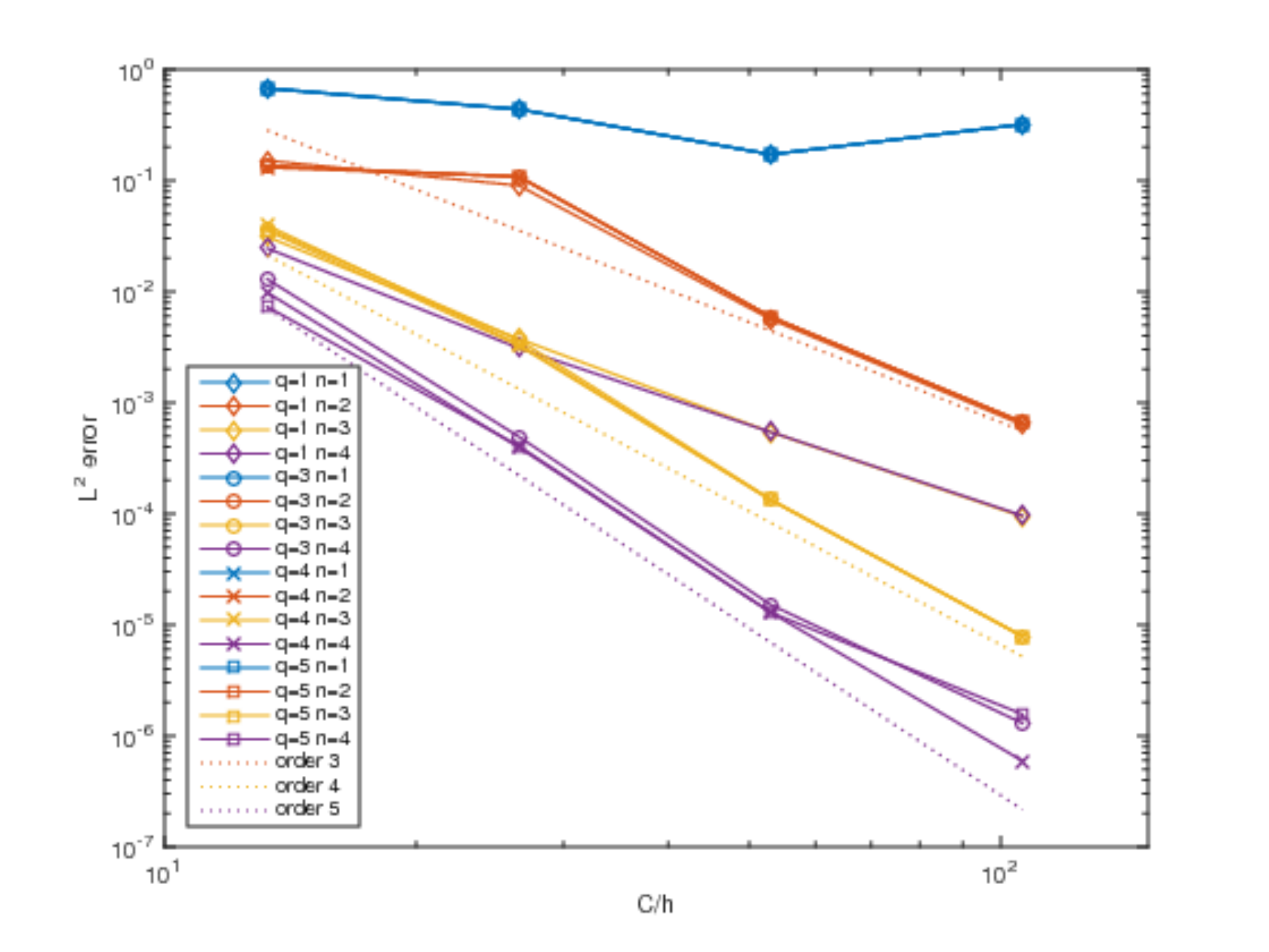}}&\resizebox{.3\textwidth}{!}{\includegraphics{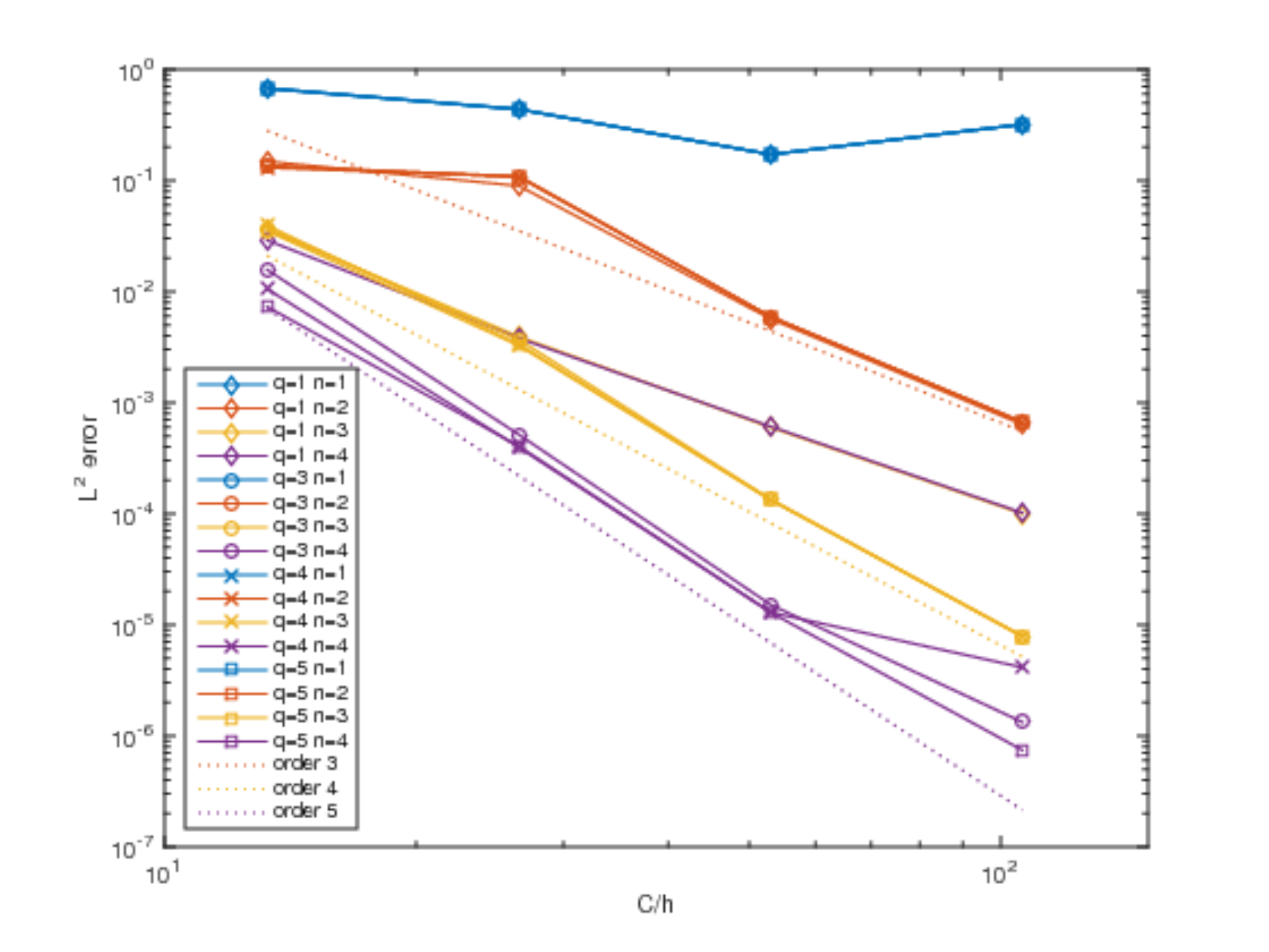}}\resizebox{.3\textwidth}{!}{\includegraphics{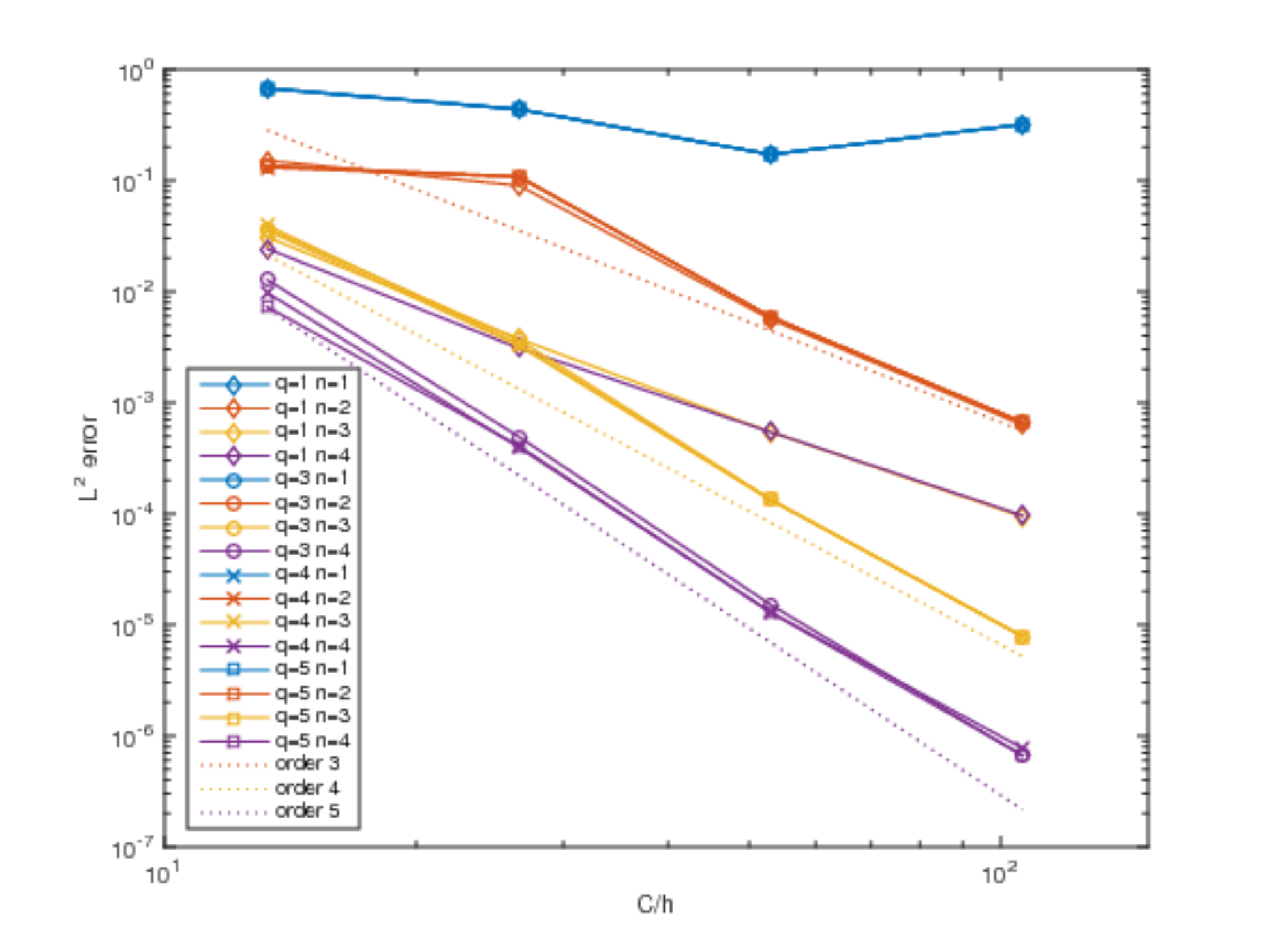}}

\end{tabular}
\end{center}
\caption{$L^2(\Omega)$ norm convergence when $\gamma=h^3$ (left panel), $\gamma=h$ (middle panel) and $\gamma=0$ (right panel).  The dotted lines in each figure show the order of convergence. }
\label{ghqs}
\end{figure}

\begin{figure}
\begin{center}
\begin{tabular}{ccc}
\resizebox{.3\textwidth}{!}{\includegraphics{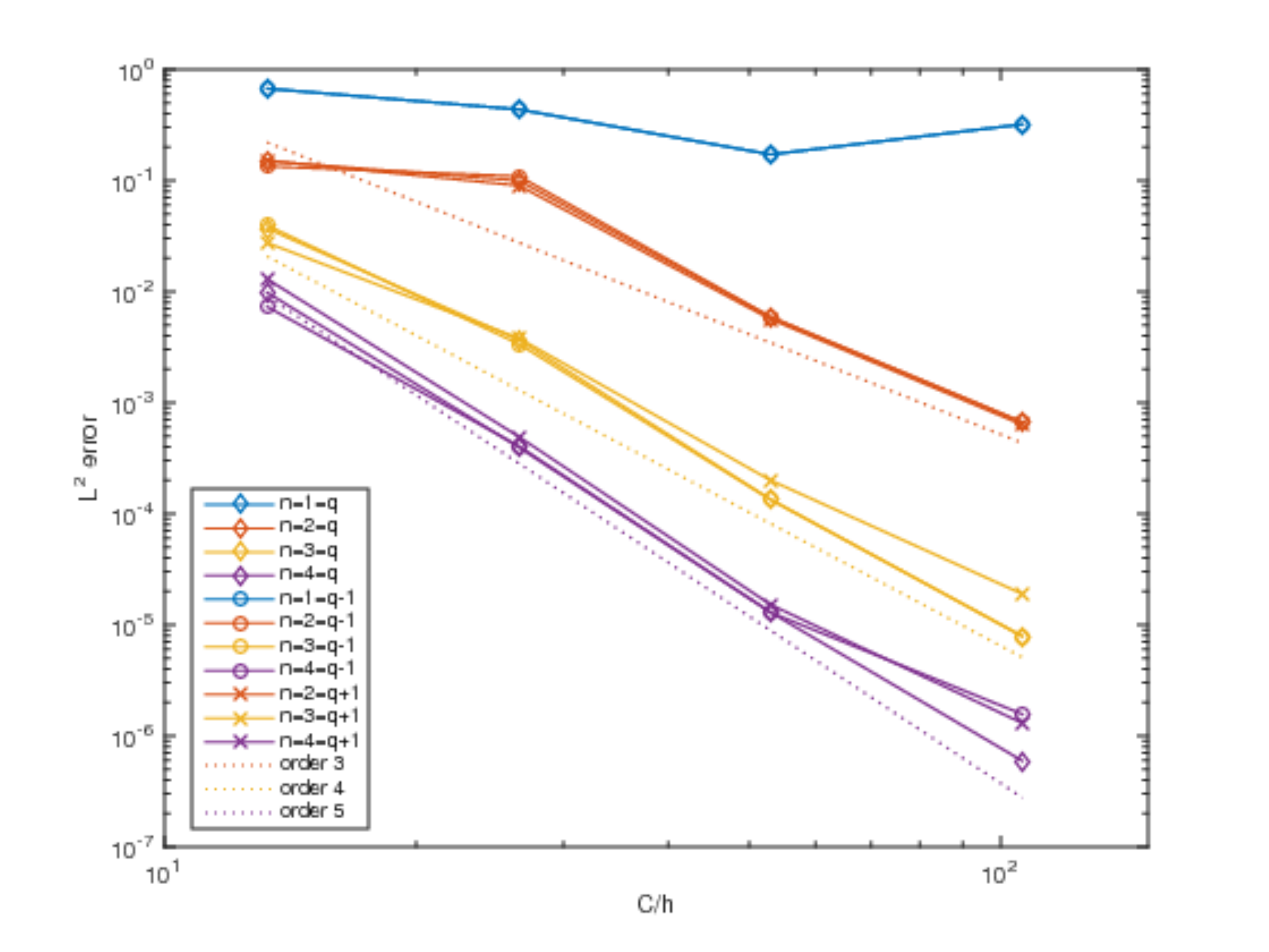}}&\resizebox{.3\textwidth}{!}{\includegraphics{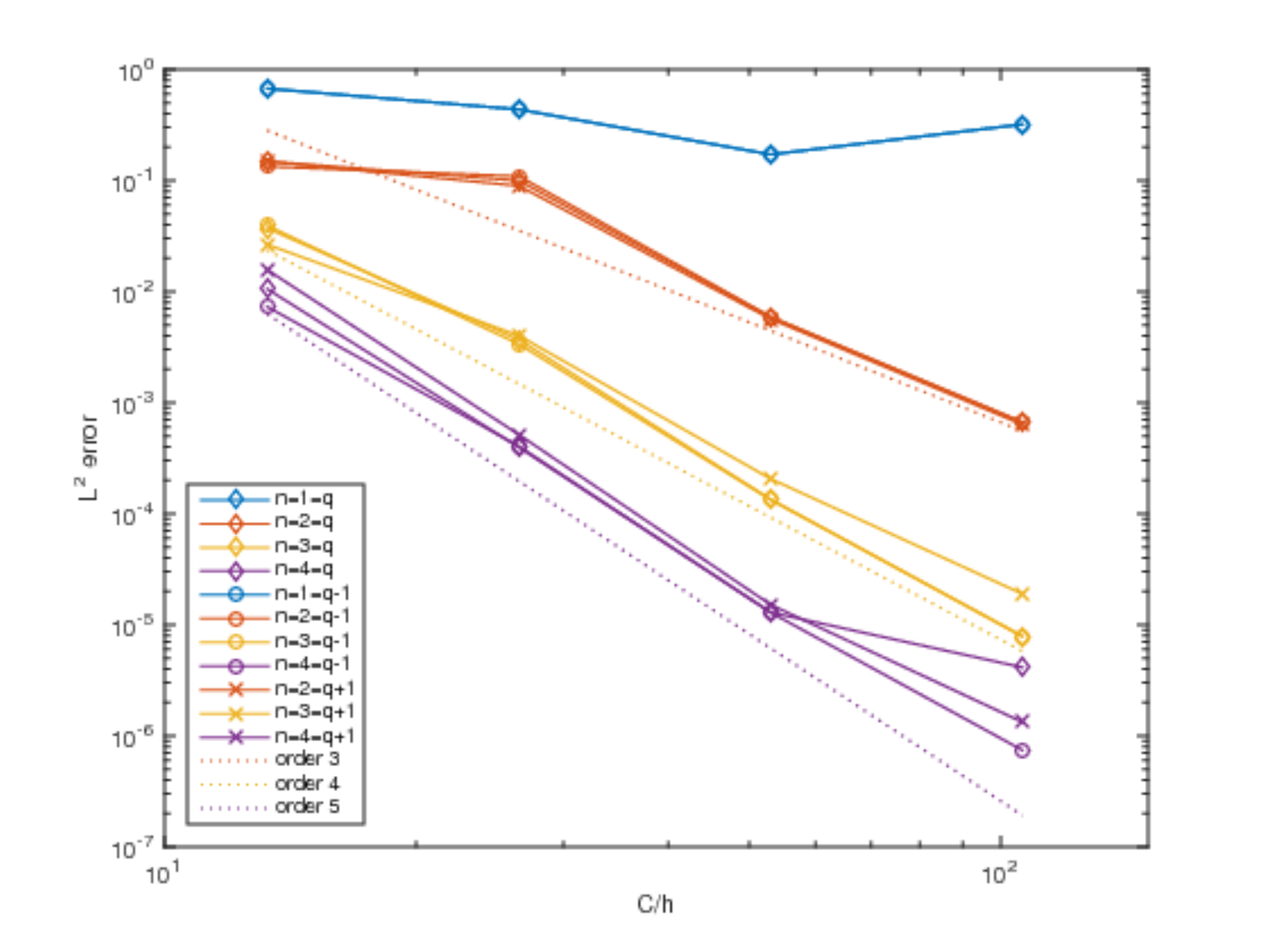}}&
\resizebox{.3\textwidth}{!}{\includegraphics{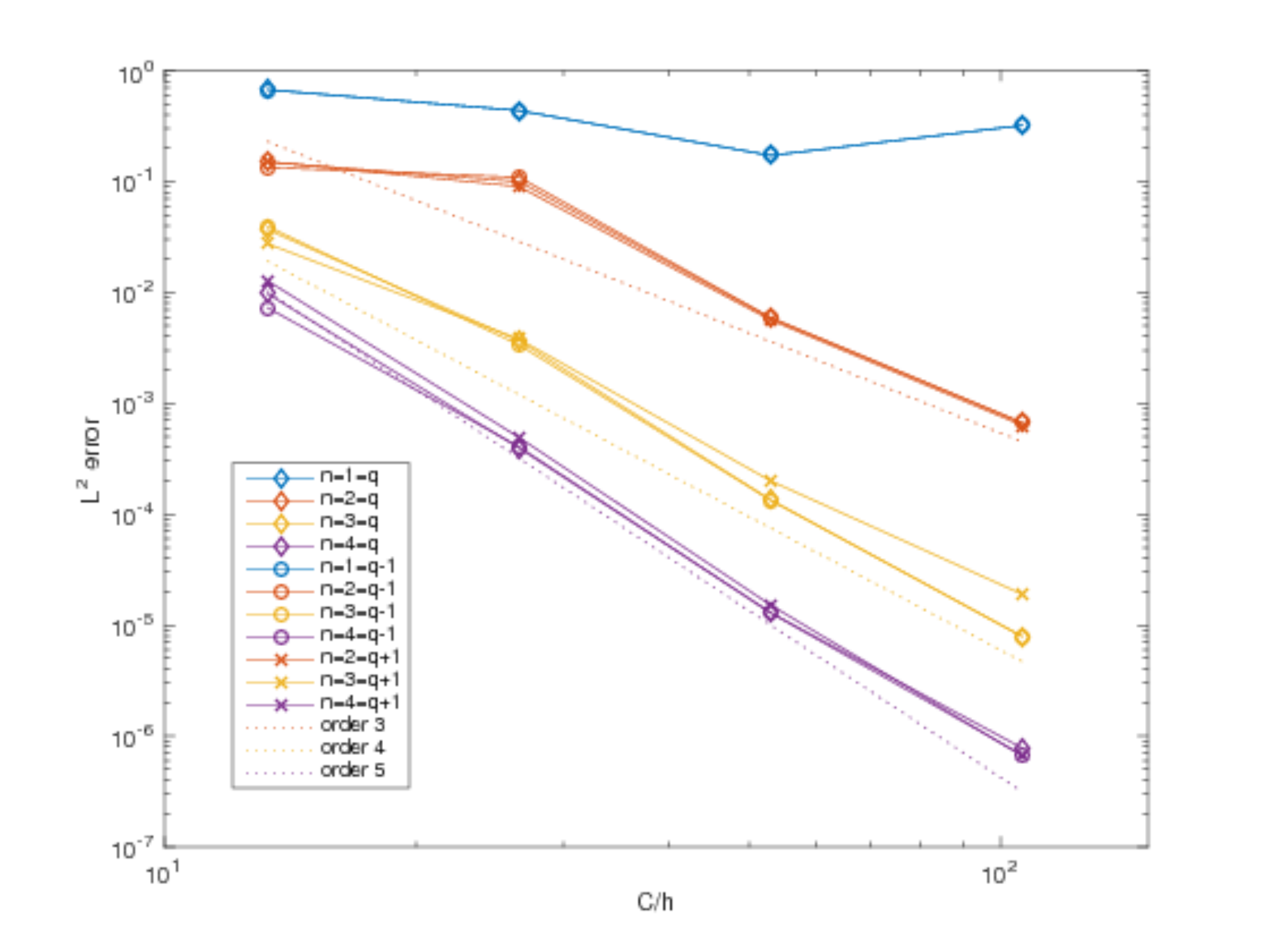}}
%\resizebox{.6\textwidth}{!}{\includegraphics{gh3neqqmm1}}
\end{tabular}
\end{center}
\caption{$L^2(\Omega)$ norm convergence when $\gamma=h^3$ (left panel), $\gamma=h$ (middle panel) and $\gamma=0$ (right panel). The dotted lines are reference lines showing $O(h^2)$, $O(h^{3})$, $O(h^{4})$, and $O(h^5)$ convergence.  }
\label{ghcomb}
\end{figure}

\subsubsection{The case $\gamma=h$.}
%Theorem \ref{th:u-uh} or \ref{th:L2cv}
%First fix $q$ vary $n$
In this section we describe numerical results obtained by increasing the parameter $\gamma$ from $h^3$ to $\gamma =h$, and we compare convergence rates obtained for different combinations of the order of approximation of the equation by the basis functions, $q$, and the number of basis functions per element, $p=2n+1$. This choice for $\gamma$ violates the hypothesis of Theorem \ref{th:L2cv}, but should result in greater stability.

%Figures \ref{ghns} and \ref{ghqs} highlight the limitation of the convergence rate when one of the parameters is held constant while the other one is increased. 

%Figure \ref{ghqs} displays series of results for $n$ constant, while $q$ varies from 1 to 4. It appears clearly that for $n=2$ increasing the value of $q$ does not improve the convergence rate: the results are very similar independently of $q$.  For both $n=3$ and $n=4$, the convergence rate increases with $q$ for values between 1 and 3, while it is identical for $q=3$ and $q=4$: it is close to $5/2$ for $q=1$, close to $3$ for $q=2$, and close to $4$ for $q=3$ and $q=4$.
%These results suggest that $q\geq n-1$ provides a convergence rate close to 2 for $n=3$ and 2.5 for $n=4$.
%\begin{figure}
%\begin{center}
%\begin{tabular}{cc}
%\resizebox{\textwidth}{!}{\includegraphics{ghns}}
%\end{tabular}
%\end{center}
%\caption{$L^2$ norm convergence $\gamma=h$.  The dotted lines are reference lines showing $O(h^{5/2})$, $O(h^3)$, $O(h^{4})$, and $O(h^5)$ convergence.}
%\label{ghns}
%\end{figure}

Figure \ref{ghqs} (middle panel) displays series of results for several choices of $q$, while $n$ varies from 1 to 4.  The results are broadly similar to those in the left panel of the same figure, although the case $n=4$ shows a slowing of convergence on fine meshes for $q=3$ and $q=4$.  In this case it appears
that $q=n+1$ is indeed a good choice.
%The color blue corresponds to $n=1$, red to $n=2$, yellow to $n=3$, and purple to $n=4$.
% It appears very clearly that for $q=1$, the convergence rate is at most $3$. However both for $q=3$ and $q=4$ 4 for $n=3$ and 5 for $n=4$.
%\begin{figure}
%\begin{center}
%\begin{tabular}{cc}
%\resizebox{.6\textwidth}{!}{\includegraphics{ghqs}}
%\end{tabular}
%\end{center}
%\caption{$L^2$ norm convergence $\gamma=h$.  The dotted lines are reference lines showing $O(h^3)$, $O(h^{4})$, and $O(h^5)$ convergence.}
%\label{ghqs}
%\end{figure}

Figure \ref{ghcomb} (middle panel) displays series of results for $n=q$, $n=q-1$, and $n=q+1$,. These convergence studies emphasize the fact that the three choices $n=q$, $n=q-1$, and $n=q+1$ seem to result in approximately the same rate of convergence, suggesting that $q=n-1$ would be the best choice for a fixed value of $n$. The best rates of convergence obtained are 3 for $n=2$, 4 for $n=3$, and 5 for $n=4$.  Although the convergence rates are similar to those when $\gamma=h^3$, the accuracy attained on a given mesh is slightly worse.  This suggests that
choosing $\gamma$ larger than $O(h^3)$ is not useful (other tests, not shown,  with $\gamma=1$ and $\gamma=10^3$ show similar results 
but even worse error at a particular mesh).%\begin{figure}
%\begin{center}
%\begin{tabular}{cc}
%\resizebox{.6\textwidth}{!}{\includegraphics{ghneqqpm1}}
%\end{tabular}
%\end{center}
%\caption{$L^2$ norm convergence $\gamma=h$. The dotted lines are reference lines showing $O(h^3)$, $O(h^{4})$, and $O(h^5)$ convergence.}
%\label{ghcomb}
%\end{figure}

%\subsection{Luneberg Lens.} No time for this and the current code doesn't support absorbing boundary conditions

\subsubsection{The case $\gamma=0$.}
Our theoretical analysis requires that $\gamma>0$ even to obtain convergence in the DG norm but this term requires integration 
over the interior of all the elements (unlike the standard PWDG or UWVF) and we would prefer to drop it. In addition we saw that
$\gamma=O(h^3)$ gives better results than $\gamma=O(h)$ so we want to test if an even smaller penalty is better. In Fig~\ref{ghqs} and \ref{ghcomb} (right panels)
we show results when $\gamma=0$.

Overall the results are similar to previous results.  Provided $q$ is chosen large enough, we can obtain $O(h^{n+1})$ convergence.  In fact the mesh now seems slightly more stable!
%\begin{figure}
%\begin{center}
%\begin{tabular}{cc}
%\resizebox{0.45\textwidth}{!}{\includegraphics{F1b}}
%\end{tabular}
%\end{center}
%\caption{$L^2$ norm convergence $\gamma=0$.  The dotted line is a reference line showing $O(h^4)$ convergence.}
%\label{FLg1}
%\end{figure}
%\begin{figure}
%\begin{center}
%\begin{tabular}{cc}
%\resizebox{\textwidth}{!}{\includegraphics{Hg0ns}}
%\end{tabular}
%\end{center}
%\caption{$L^2$ norm convergence $\gamma=0$.  The dotted lines are reference lines showing  $O(h^{5/2})$, $O(h^3)$, $O(h^{4})$, and $O(h^5)$ convergence.}
%\label{g0ns}
%\end{figure}

%\begin{figure}
%\begin{center}
%\begin{tabular}{cc}
%\resizebox{.6\textwidth}{!}{\includegraphics{g0qs}}
%\end{tabular}
%\end{center}
%\caption{$L^2$ norm convergence $\gamma=0$.  The dotted lines are reference lines showing $O(h^3)$, $O(h^{4})$, and $O(h^5)$  convergence.}
%\label{g0qs}
%\end{figure}
%
%\begin{figure}
%\begin{center}
%\begin{tabular}{cc}
%\resizebox{.6\textwidth}{!}{\includegraphics{g0neqqpm1}}
%%\resizebox{.6\textwidth}{!}{\includegraphics{g0neqqmm1}}
%\end{tabular}
%\end{center}
%\caption{$L^2$ norm convergence $\gamma=0$. The dotted lines are reference lines showing $O(h^3)$, $O(h^{4})$, and $O(h^5)$ convergence.}
%\label{g0comb}
%\end{figure}}
}
\subsection{Weber waves}
{%\color{red}
In this section we approximate what we term Weber waves.  These are solutions  of the following problem
\[
\Delta u + \kappa^2\left(\frac{x_2^2}{4}-\frac{a}{\kappa}\right)u=0
\]
in the domain $\Omega=[-1,1]^2$ subject to
\[
u(x_2,y_2)=P_o(\sqrt{\kappa}x_2,a)\mbox{ on }\partial \Omega
\]
where $w(x_2)=P_o(x_2,a)$  is the odd solution of Weber's differential  equation 
\[
\frac{d^2w}{dx_2^2}+\left(\frac{x_2^2}{4}-a\right)=0
\]
defined in \cite{ban04} and implemented in \cite{ban_matlab}.  We choose $a=5$ and $\kappa=50$ which gives the solution in Fig.~\ref{para_exact}.  For this choice of $a$, $\kappa$ and domain, the solution is evanescent for $|x_2|<\sqrt(2/5)$ and oscillatory otherwise.  So this
example again tests how well GPWs can approximate both traveling and evanescent solutions.  

\begin{figure}
\begin{center}
\resizebox{0.5\textwidth}{!}{\includegraphics{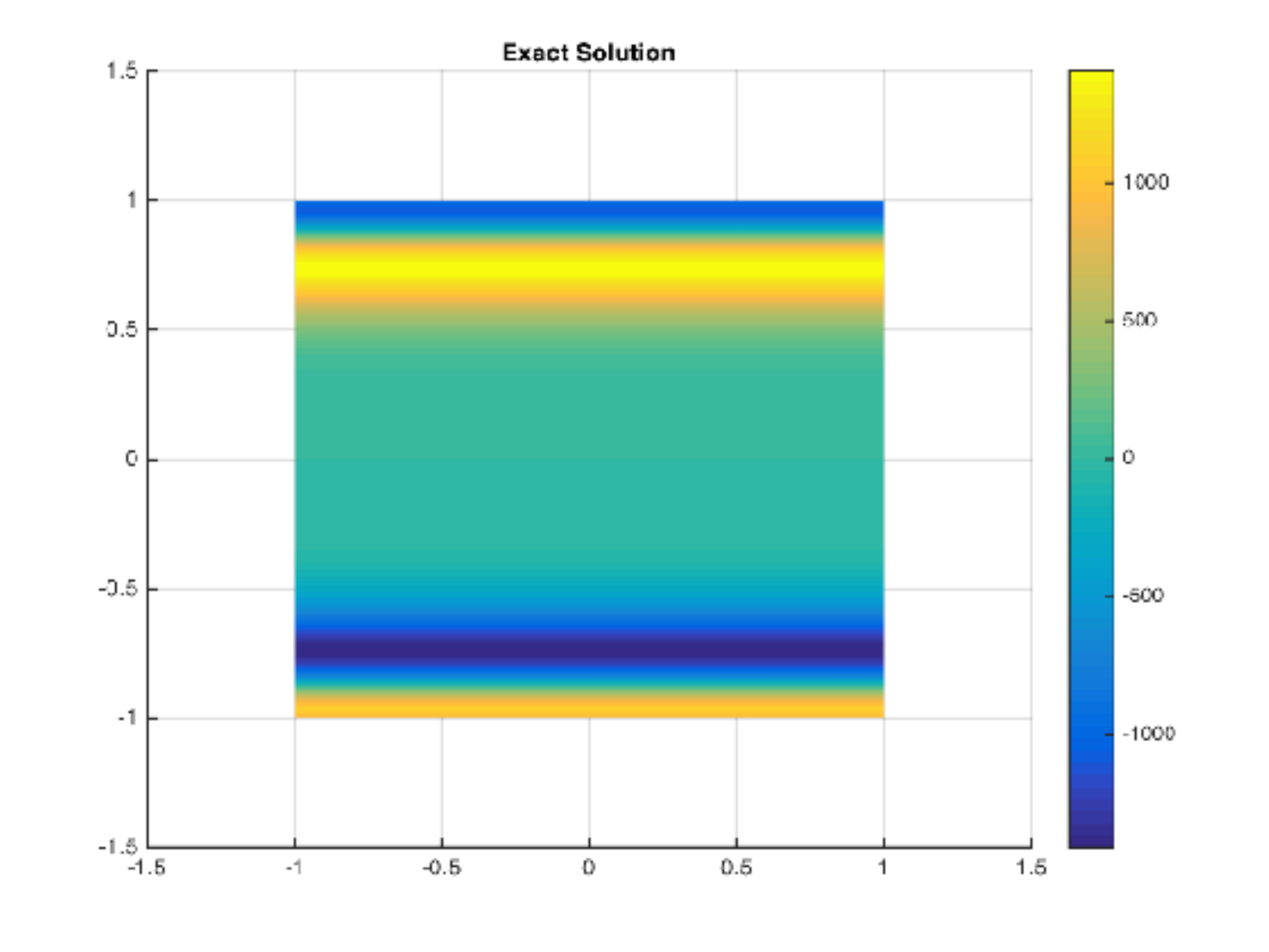}}
\end{center}
\caption{Exact solution for Weber's equation with $a=5$}
\label{para_exact}
\end{figure}

\begin{figure}
\begin{center}
\begin{tabular}{ccc}
\resizebox{.4\textwidth}{!}{\includegraphics{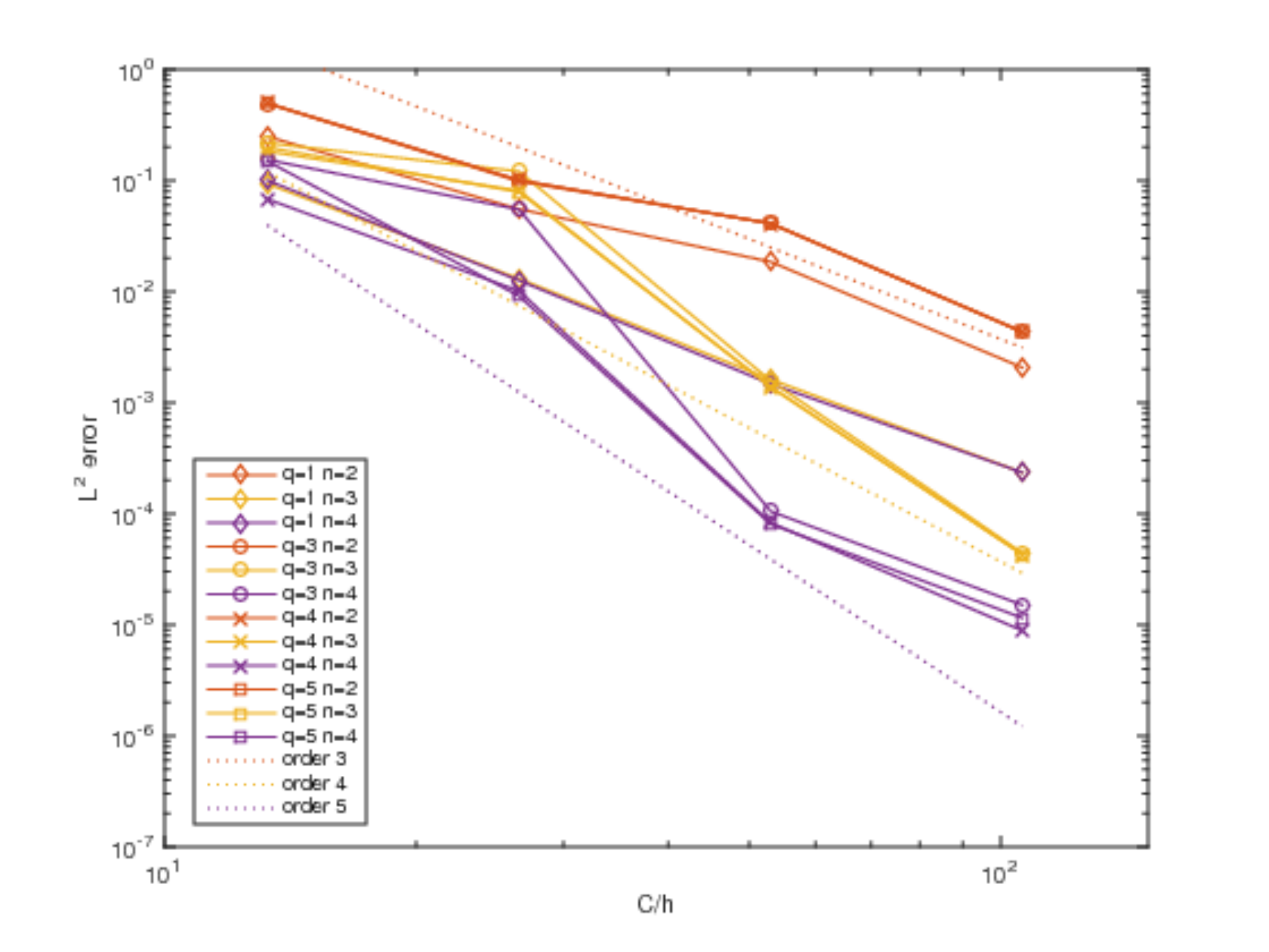}}&\resizebox{.4\textwidth}{!}{\includegraphics{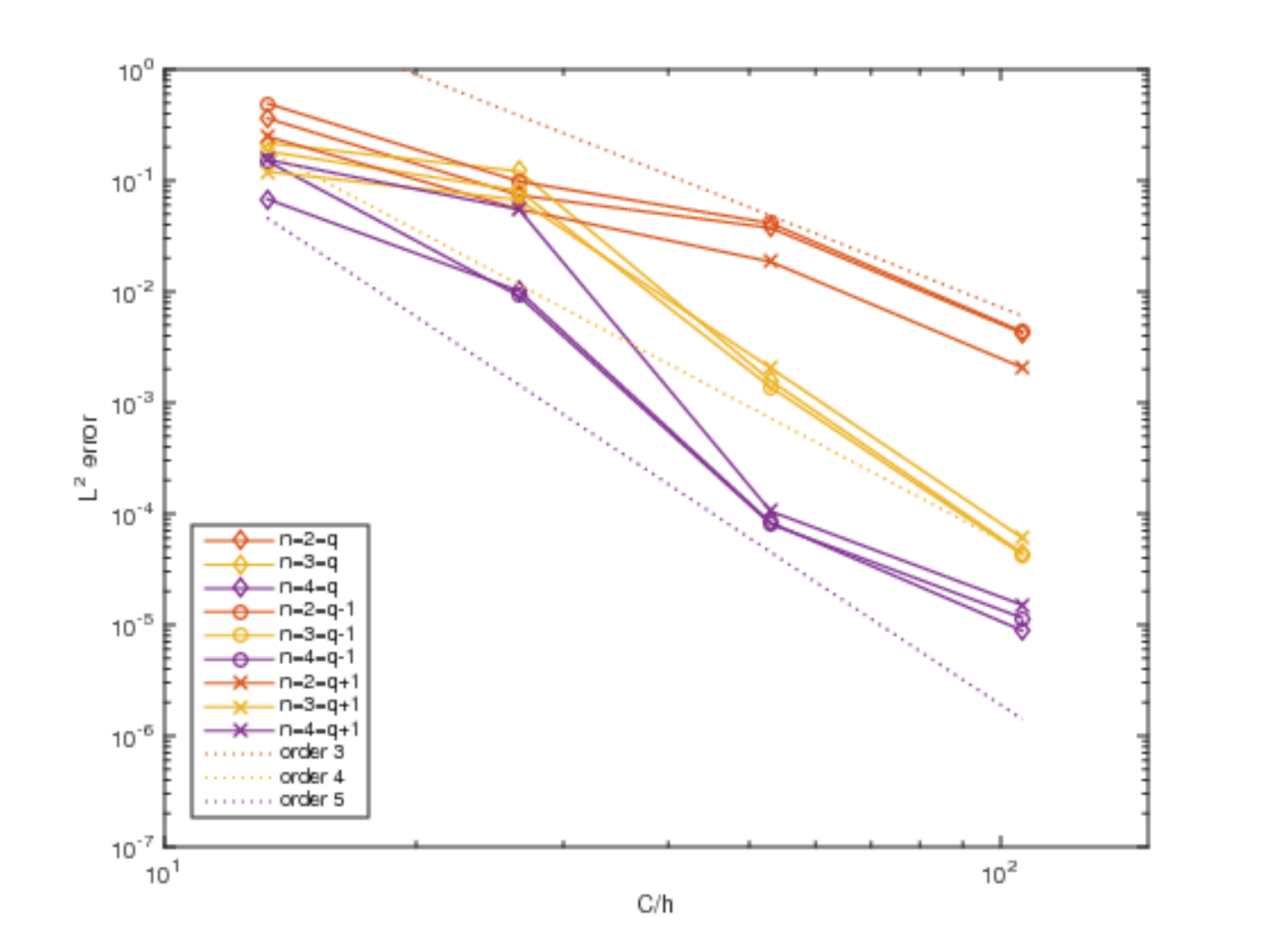}}
\end{tabular}
\end{center}
\caption{Analogues of Fig.~\ref{ghqs} (left) and Fig.~\ref{ghcomb} (right) for the Weber wave example.  }
\label{Wcomb}
\end{figure}

Results are shown in Fig.~\ref{Wcomb}. Broadly the same picture emerges for the Weber example as for the Airy example. We see $O(h^{n+1})$ convergence (this is not completely clear when $n=4$) provided $q$ is large enough.  
\section{Conclusion}
We have provided a modification to the TDG approach that allows the approximation of solutions of the Helmholtz equation in which the refractive index is piecewise smooth using Generalized Plane Waves.  The resulting numerical scheme maintains one advantage of TDG: the number of degrees of freedom per element increases linearly with the order of approximation of the method.  But the method looses one advantage of pure TDG: there is now a need to perform numerical integration element by element.  This is required because we introduce a new stabilization term, and also because the GPW basis functions are not exact solutions of the adjoint problem.  

Theory suggests a choice of parameters that balances polynomial degree with the number of GPWs in the basis element by element.  This is
examined in detail using Airy's equation to provide an exact solution, and substantiated further by using Weber's example.  In the Airy case we have also studied if our new stabilization term is necessary: the numerical results in this one simple case suggest that it can be ignored, but much more testing (for example with less smooth solutions with curved wavefronts) and theoretical backup would be needed to confirm this.  Our testing also suggests that our predicted choice of polynomial degree $q=n+1$ may be excessive.

In summary, we have achieved a first theoretical convergence result for GPWs in a TDG setting.  Our numerical investigations suggest that the theory is not optimal so far, but do show examples where GPWs can provide accurate solutions to wave propagation problems in which the 
coefficients are smooth functions of position.}

\section*{Acknowledgements}
This research reported in this paper was supported in part by NSF grant DMS-1216620.  The authors would like to thank Prof. Timothy Warburton (VPI) for
making is TDG code available to us.

\bibliographystyle{siam}

\bibliography{PwDG1}

\begin{thebibliography}{10}

\bibitem{ban_matlab}
{\sc M.~Bandres}, {\em Weber functions (parabolic cylinder functions)}.
\newblock Available at
  \\http://www.mathworks.com/matlabcentral/fileexchange/46131-weber-functions--parabolic-cylinder-functions-,
  2015.

\bibitem{ban04}
{\sc M.~Bandres and J.~Guti\'errez-Vega}, {\em Parabolic nondiffracting optical
  fields}, Optics Letters, 29 (2004), pp.~44--46.

\bibitem{buf07}
{\sc A.~Buffa and P.~Monk}, {\em Error estimates for the {U}ltra {W}eak
  {V}ariational {F}ormulation of the {H}elmholtz equation}, ESAIM: Mathematical
  Modeling and Numerical Analysis, 42 (2008), pp.~925--40.

\bibitem{cessenat_phd}
{\sc O.~Cessenat}, {\em Application d'une nouvelle formulation variationnelle
  aux \'{e}quations d'ondes harmoniques. {P}robl\`{e}mes de {H}elmholtz 2{D} et
  de {M}axwell 3{D}.}, PhD thesis, Universit\'{e} Paris IX Dauphine, 1996.

\bibitem{despres}
{\sc O.~Cessenat and B.~Despr\'{e}s}, {\em Application of the ultra-weak
  variational formulation of elliptic {PDE}s to the 2-dimensional {H}elmholtz
  problem}, SIAM J. Numer. Anal., 35 (1998), pp.~255--99.

\bibitem{MelenkEsterhazy12}
{\sc S.~Esterhazy and J.~Melenk}, {\em On stability of discretizations of the
  {H}elmholtz equation}, in Numerical Analysis of Multiscale Problems,
  I.~Graham, T.~Hou, O.~Lakkis, and R.~Scheichl, eds., vol.~83 of Lecture Notes
  on Computational Science and Engineering, Springer, 2012.
\newblock pp.~285-324.

\bibitem{git09}
{\sc C.~Gittelson, R.~Hiptmair, and I.~Perugia}, {\em Plane wave discontinuous
  {G}alerkin methods: analysis of the $h$-version}, ESAIM: Mathematical
  Modeling and Numerical Analysis, 43 (2009), pp.~297--331.

\bibitem{HMP11}
{\sc R.~Hiptmair, A.~Moiola, and I.~Perugia}, {\em Plane wave discontinuous
  {G}alerkin methods for the {2D} {H}elmholtz equation: analysis of the
  $p$-version}, SIAM J. Numer. Anal., 49 (2011), pp.~264--84.

\bibitem{hmp13}
\leavevmode\vrule height 2pt depth -1.6pt width 23pt, {\em Trefftz
  discontinuous {G}alerkin methods for acoustic scattering on locally refined
  meshes}, Applied Numerical Mathematics, 79 (2014), pp.~79--91.

\bibitem{hmp15}
\leavevmode\vrule height 2pt depth -1.6pt width 23pt, {\em Plane wave
  discontinuous {G}alerkin methods: {E}xponential convergence of the
  $hp$-version}, Foundatios of Computational Mathematics,  (2015), pp.~1--39.
\newblock DOI 10.1007/s10208-015-9260-1.

\bibitem{hut03}
{\sc T.~Huttunen, P.~Monk, and J.~Kaipio}, {\em Computational aspects of the
  {U}ltra {W}eak {V}ariational {F}ormulation}, J. Comput. Phys., 182 (2002),
  pp.~27--46.

\bibitem{LM_thesis}
{\sc L.~Imbert-G\'erard}, {\em Mathematical and numerical problems of some wave
  phenomena appearing in magnetic plasmas}, PhD thesis, Universit\'e Pierre et
  Marie Curie - Paris VI, 2013.

\bibitem{IG2015}
{\sc L.-M. Imbert-Gérard}, {\em Interpolation properties of generalized plane
  waves}, Numerische Mathematik,  (2015), pp.~1--29.

\bibitem{IGD2013}
{\sc L.-M. Imbert-Gérard and D.~Bruno}, {\em A generalized plane-wave
  numerical method for smooth nonconstant coefficients}, IMA J. Numer. Anal.,
  (2013), pp.~1072--1103.

\bibitem{kapita14}
{\sc S.~Kapita, P.~Monk, and T.~Warburton}, {\em Residual-based adaptivity and
  {PWDG} methods for the {H}elmholtz equation}, SIAM Journal on Scientific
  Computing, 37 (2015), pp.~A1525--A1553.

\bibitem{HMP13I}
{\sc T.~Luostari, T.~Huttunen, and P.~Monk}, {\em Improvements for the ultra
  weak variational formulation}, Int. J. Numer. Meth. Eng., 94 (2013),
  pp.~598--624.

\end{thebibliography}

\end{document}